\documentclass[a4paper]{article}
\addtolength{\hoffset}{-1.6cm}
\addtolength{\textwidth}{3.2cm}
%\usepackage[top=2.6cm,bottom=2.6cm, left=2.6cm,right=2.6cm]{geometry}

% This is to split the top left-most box of an array:
%\usepackage{slashbox}

\usepackage{makecell}

\usepackage[numbers,sort&compress]{natbib}

% This is for the numerotation of the tables:
\makeatletter
\let\c@figure\c@table
\let\ftype@figure\ftype@table

% end

%\usepackage{fontspec}
%\setmainfont{Arial}
%

%\usepackage{xcolor}
%\pagecolor[rgb]{0.1,0.1,0.1}
%\color[rgb]{1,1,1}

%\usepackage[francais]{babel}
%\usepackage[T1]{fontenc}
%\usepackage[ansinew]{inputenc}

% pour modeler le titre
%\usepackage{titling}
% pour modeler les enumerate
\usepackage{enumerate}
\usepackage{calc}

\usepackage{lmodern} %Type1-font for non-english texts and characters
%\usetikzlibrary{calc} 

%% Packages for Graphics & Figures %%%%%%%%%%%%%%%%%%%%%%%%%%
\usepackage{graphicx} %%For loading graphic files

%Packages mathmatiques
\usepackage{amsmath, accents}
\usepackage{amssymb,tikz}
\usepackage{pict2e}
\usepackage{amsthm}
\usepackage{amscd}
\usepackage{mathrsfs}
\usepackage{todonotes}
%increase the box space for todonotes
\setlength{\marginparwidth}{2.8cm}

\usetikzlibrary{calc} % This is to add two points in tikzpictures

\usepackage{yfonts}

\usepackage{marvosym}
\usepackage[percent]{overpic}

\newtheorem{theorem}{Theorem} [section]

\newtheorem{lemma}[theorem]{Lemma}

\theoremstyle{definition}

\newtheorem{remark}{Remark}

% Macros
\newcommand{\C}{\mathbb{C}}

\newcommand{\R}{\mathbb{R}}
\newcommand{\N}{\mathbb{N}}

% Here is how to reduce the interline in the bibliography
\let\oldbibliography\thebibliography
\renewcommand{\thebibliography}[1]{\oldbibliography{#1}
\setlength{\itemsep}{-0.5pt}}

%%%%%Principal value integral sign: \dashint %%%%%%%

\def\XXint#1#2#3{{\setbox0=\hbox{$#1{#2#3}{\int}$}
\vcenter{\hbox{$#2#3$}}\kern-.5\wd0}}

%tikz
\usepackage{tikz}
\usetikzlibrary{arrows}
\usetikzlibrary{decorations.pathmorphing}
\usetikzlibrary{decorations.markings}
\usetikzlibrary{patterns}
\usetikzlibrary{automata}
\usetikzlibrary{positioning}
\usepackage{tikz-cd}
\tikzset{->-/.style={decoration={
				markings,
				mark=at position #1 with {\arrow{latex}}},postaction={decorate}}}
	
	\tikzset{-<-/.style={decoration={
				markings,
				mark=at position #1 with {\arrowreversed{latex}}},postaction={decorate}}}
				
\tikzset{
	master/.style={
		execute at end picture={
			\coordinate (lower right) at (current bounding box.south east);
			\coordinate (upper left) at (current bounding box.north west);
		}
	},
	slave/.style={
		execute at end picture={
			\pgfresetboundingbox
			\path (upper left) rectangle (lower right);
		}
	}
}

\usetikzlibrary{shapes.misc}\tikzset{cross/.style={cross out, draw, 
         minimum size=2*(#1-\pgflinewidth), 
         inner sep=0pt, outer sep=0pt}}

\usepackage{pgfplots}
%\pgfplotsset{compat = 1.11}
%\pgfplotsset{compat = 1}% <-- moves axis labels near ticklabels (respects tick label widths)
	
\allowdisplaybreaks	

\numberwithin{equation}{section}

\def\bigO{{\cal O}}

% new \oset macro:
\makeatletter
\newcommand{\oset}[3][0ex]{%
  \mathrel{\mathop{#3}\limits^{
    \vbox to#1{\kern-2\ex@
    \hbox{$\scriptstyle#2$}\vss}}}}
\makeatother

\usepackage{tocloft}
\setlength{\cftbeforesecskip}{0pt} % spacing before a section in table of content

\setcounter{tocdepth}{2} % {tocdepth}{1} display only sections. Increase the number to display subsections

% --------------------------------------
% for frame (https://tex.stackexchange.com/questions/248889/package-for-framed-boxes)

%\newcommand{\my2box}[4]{
%    \begin{figure}[h]
%        \centering
%    \begin{tikzpicture}
%        \node[anchor=text,text width=\columnwidth-1.2cm, draw, rounded corners, line width=1pt, fill=#3, inner sep=5mm] (big) {\\#4};
%        %\node[draw, rounded corners, line width=.5pt, fill=#2, anchor=west, xshift=5mm] (small) at (big.north west) {#1};
%    \end{tikzpicture}
%    \end{figure}
%}
% --------------------------------------

\usepackage[colorlinks=true]{hyperref}
\hypersetup{urlcolor=blue, citecolor=red, linkcolor=blue}

\begin{document}
\title{Balayage of measures: behavior near a corner}
\author{Christophe Charlier\footnote{Centre for Mathematical Sciences, Lund University, 22100 Lund, Sweden. E-mail: christophe.charlier@math.lu.se} \, and Jonatan Lenells\footnote{Department of Mathematics, KTH Royal Institute of Technology, 10044 Stockholm, Sweden. E-mail: jlenells@kth.se}}
\date{}
\maketitle

\begin{abstract}
We consider the balayage of a measure $\mu$ defined on a domain $\Omega$ onto its boundary $\partial \Omega$. Assuming that $\Omega$ has a corner of opening $\pi \alpha$ at a point $z_0 \in \partial \Omega$ for some $0 < \alpha \leq 2$ and that $d\mu(z) \asymp |z-z_{0}|^{2b-2}d^{2}z$ as $z\to z_0$ for some $b > 0$, we obtain the precise rate of vanishing of the balayage of $\mu$ near $z_{0}$. The rate of vanishing is universal in the sense that it only depends on $\alpha$ and $b$. 
We also treat the case when the domain has multiple corners at the same point. Moreover, when $2b\leq \frac{1}{\alpha}$, we provide explicit constants for the upper and lower bounds.
\end{abstract}
\noindent
{\small{\sc AMS Subject Classification (2020)}: 31A15, 31A20, 31A05}

\noindent
{\small{\sc Keywords}: Balayage measure, harmonic measure, boundary behavior.}

%\small \renewcommand{\baselinestretch}{0.75}   \tableofcontents \addtocontents{toc}{\vspace{-0.2cm}} \normalsize \renewcommand{\baselinestretch}{1}

%{
%\hypersetup{linkcolor=black}
%
%\tableofcontents %\addtocontents{toc}{\vspace{-0.2cm}} \normalsize
%
%}

\section{Introduction}

Balayage measures were introduced by Henri Poincar\'e in the late 19th century as a tool to solve the Laplace equation \cite{P1899}. Given a measure $\mu$ on a domain $\Omega$, the balayage (sweeping) of $\mu$ onto $\partial \Omega$ is a measure $\nu$ on $\partial \Omega$ whose potential outside $\Omega$ coincides with that of $\mu$ (up to a constant if $\Omega$ is unbounded).
More precisely, given a bounded Jordan domain $\Omega$ and a non-negative measure $\mu$ with compact support in $\Omega$, the balayage measure $\nu := \mathrm{Bal}(\mu,\partial \Omega)$ is defined as the unique measure supported on $\partial \Omega$ such that $\nu(\partial \Omega)=\mu(\Omega)$, $\nu(P)=0$ for every Borel set $P$ of zero capacity, and such that 
\begin{align*}
\int_{\partial \Omega} \log \frac{1}{|z-w|} \, d\nu(w) = \int_{\Omega} \log \frac{1}{|z-w|} \, d\mu(w)
\end{align*}
holds for quasi-every $z \in \C\setminus \Omega$ (see e.g. \cite[Theorem II.4.7]{SaTo}). 
 Balayage measures can also be defined in terms of the harmonic measure (see \eqref{nuharmonicmeasure} below). The concept of balayage plays a significant role in potential theory, see e.g. \cite{D2001, Land1972}. Various applications and generalizations of balayage theory can be found in \cite{AR2017, AS2019, BBMP2009, C2023, GK2021, G1997, G2004, GR2018, GS1994, NW2023, Z2022, Z2023, Z2023b}.

\medskip Let $B_r(z)$ denote the open disk of radius $r$ centered at $z$.
In this paper we deal with the following question, which is relevant for example in the study of two-dimensional Coulomb gases as explained in Section \ref{section:application}. 

\begin{enumerate}
\item[] \textbf{Question:} If 
\begin{itemize}
\item $\Omega$ has a H\"older-$C^1$ corner of opening $\pi \alpha$ at a point $z_0 \in \partial \Omega$ for some $0 < \alpha \leq 2$, and
\item $d\mu(z) \asymp |z-z_{0}|^{2b-2}d^{2}z$ as $z\to z_0$ ($z \in \Omega$) for some $b > 0$, where $d^{2}z$ is the two-dimensional Lebesgue measure,
\end{itemize}
what is the behavior of $\nu(\partial \Omega \cap B_r(z_0))$ as $r\to 0$?  
\end{enumerate}

It has been conjectured in \cite{C2023} that 
\begin{align}\label{nuestimate intro}
\nu(\partial \Omega \cap B_r(z_0)) \asymp \begin{cases}
r^{\min\{2b,\frac{1}{\alpha}\}}, & \mbox{if } 2b \neq \frac{1}{\alpha}, \\
r^{2b}\log \frac{1}{r}, & \mbox{if } 2b = \frac{1}{\alpha},
\end{cases} \qquad \mbox{as } r \to 0.
\end{align}
Here we prove this conjecture. In particular, we establish that the rate of vanishing of $\nu(\partial \Omega \cap B_r(z_0))$ as $r\to 0$ only depends on $\alpha$ and $b$, but is otherwise universal, in the sense that it is independent of other properties of $\Omega$ and $\mu$. In fact, we will strengthen (\ref{nuestimate intro}) in several ways:

\begin{enumerate}
\item We show that (\ref{nuestimate intro}) holds for more general open sets $\Omega$ than Jordan domains. In particular, $\Omega$ does not have to be connected, and the connected components of $\Omega$ do not have to be simply connected. 
%Quite generally, the vanishing rate of $\nu(\partial \Omega \cap B_r(z_0))$ is a local property independent of the structure of $\Omega$ away from $z_0$. 

\item We obtain bounds on the implicit constants in (\ref{nuestimate intro}) whenever $2b \leq 1/\alpha$. For example, if $2b < 1/\alpha$ and $d\mu(z) = (1 + o(1)) |z-z_{0}|^{2b-2}d^{2}z$ as $z\to z_0$, then we show that, for any $\epsilon >0$,
$$(1-\epsilon) \frac{\tan(\pi \alpha b)}{2b^2} r^{2b} \leq \nu(\partial \Omega \cap B_r(z_0)) \leq (1+\epsilon) \frac{\pi \alpha}{2b} \bigg(1  + \frac{16 b}{\pi (\frac{1}{\alpha} - 2b)}\bigg)  r^{2b}$$
for all small enough $r>0$. Since $\frac{\tan(\pi \alpha b)}{2b^2} = \frac{\pi \alpha}{2b} + O(b)$ as $b \to 0$, this formula shows that $\nu(\partial \Omega \cap B_r(z_0))$ behaves like $\frac{\pi \alpha}{2b} r^{2b}$ in the limit of small $b$, i.e., not only is the order of vanishing of the balayage measure $\nu$ near the corner a universal quantity depending only on $\alpha$ and $b$, but the constant prefactor is also universal in the small $b$ limit. 

\item We show that (\ref{nuestimate intro}) remains true also in the case when $\Omega$ has multiple corners at a single point $z_0$, provided that $\pi \alpha$ is the largest of the opening angles. More precisely, if $\Omega$ is an open set such that $\partial \Omega \cap B_r(z_0)$ for small $r>0$ is a disjoint union of $m$ domains with H\"older-$C^1$ corners at $z_0$ with opening angles $\pi \alpha_j \in (0,2\pi]$, $j=1,\dots, m$, and $\Omega \setminus B_r(z_0)$ is a disjoint union of finitely connected Jordan domains, then (\ref{nuestimate intro}) holds with $\alpha := \max_j \alpha_j$.
\end{enumerate}

The situation where $\Omega$ has an outward-pointing cusp (i.e. the case $\alpha=0$) deserves a separate analysis and is treated in the companion paper \cite{CLcusp}.

\section{Main results}

Let $\C^* = \C \cup \{\infty\}$ be the Riemann sphere and let $\mathbb{D} = \{z \in \C : |z| < 1\}$ be the open unit disk. 
A {\it Jordan curve in $\C^*$} is the image of the unit circle under an injective continuous function $\partial \mathbb{D} \to \C^*$; in other words, it is a non-self-intersecting loop in $\C^*$.
If $\Omega$ is a simply connected open subset of $\C^*$, then $\Omega$ is a {\it Jordan domain} if $\partial \Omega$ is a Jordan curve in $\C^*$. 
If $\Omega$ is an open connected subset of $\C^*$ such that $\partial \Omega$ is a finite union of pairwise disjoint Jordan curves, then we say that $\Omega$ is a {\it finitely connected Jordan domain} in $\C^*$.

A {\it Jordan arc in $\C^*$} is the image of a closed interval $I \subset \R$ under an injective continuous function $I \to \C^*$. A Jordan arc $C \subset \C$ is of class $C^{1,\gamma}$, $0 < \gamma \leq 1$, if it has a parametrization $C: w(t), 0 \leq t \leq 1$ such that the derivative $w'(t)$ exists, is nonzero, and is H\"older continuous with exponent $\gamma$ on $[0,1]$.

%A function $w:[0,1] \to \C$ is {\it Dini-continuous} if
%$$\int_0^1 \delta^{-1} m(\delta) d\delta < \infty,$$
%where $m(\delta)$ is the modulus of continuity defined by
%$$m(\delta) = \sup\{|w(t_1) - w(t_2)| : t_1, t_2 \in [0,1], |t_1 - t_2| \leq \delta \}.$$
%Every Dini continuous function is continuous. Every Lipschitz continuous function is Dini continuous.
%A curve $C \subset \C$ is {\it Dini-smooth} if it has a parametrization $C: w(t), 0 \leq t \leq 1$ such that the derivative $w'(t)$ exists and is Dini-continuous and nonzero on $[0,1]$. 

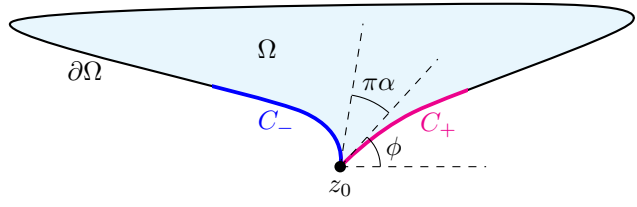
\begin{figure}
\begin{center}
\begin{tikzpicture}[scale = 2.4]
  \coordinate (origin) at (0, 0);

  \coordinate (z0) at (0,0);
  \coordinate (c2) at (0.4, 0.3);
  \coordinate (c3) at (1.6, 0.8);
  \coordinate (c4) at (0.9, 0.9);
  \coordinate (c5) at (-1.8, 0.8);
  \coordinate (c6) at (-0.2, 0.3);
  \coordinate (c7) at (0.4, -0.45);

%Draw \Omega
  \draw[black, thick, fill=cyan!8] plot [smooth] coordinates {(z0) (c2) (c3) (c4) (c5) (c6) (z0)};

%Draw C_+
  \begin{scope}
    \clip (0.01,0) rectangle (0.7,0.7);
  \draw[magenta, thick, line width = 0.5mm] plot [smooth] coordinates {(z0) (c2) (c3) (c4) (c5) (c6) (z0)};
  \end{scope}
  \node[magenta] at (0.54,0.23){$C_+$};

%Draw C^+
  \begin{scope}
    \clip (0.015,0) rectangle (-0.7,0.7);
  \draw[blue, thick, line width = 0.5mm] plot [smooth] coordinates {(z0) (c2) (c3) (c4) (c5) (c6) (z0)};
  \end{scope}
  \node[blue] at (-0.35,0.24){$C_-$};

%Draw dashed lines indicating angles
  \draw[black, dashed] (z0) -- (0:0.8);
  \draw[black, dashed] (z0) -- (48:0.8);
  \draw[black, dashed] (z0) -- (81:0.8);

\draw[black] (origin)+(0:0.22) arc (0:48:0.22);
  \node at (19:0.31){$\phi$};

\draw[black] (origin)+(48:0.4) arc (48:81:0.4);
  \node at (65:0.5){$\pi \alpha$};

  \node at (-0.4,0.65){$\Omega$};

  \node at (-1.4,0.53){$\partial\Omega$};

%Draw z_0
\fill (origin) circle (0.9pt);
\node at (0,-0.12){$z_0$};

\end{tikzpicture} 
 \caption{\label{cornerfigure} Illustration of a corner of opening $\pi \alpha$ at $z_0$.}
 \end{center}
\end{figure}

Let $\partial \Omega$ be the boundary of a finitely connected Jordan domain $\Omega$ in $\C^*$. 
We say that $\Omega$ has a {\it corner} of opening $\pi \alpha$, $0 < \alpha \leq 2$, at $z_0 \in \partial \Omega \cap \C$ if there are closed Jordan arcs $C_\pm \subset \partial \Omega$ ending at $z_0$ and lying on different sides of $z_0$ such that
\begin{align}\label{arg def angle}
\arg(z - z_0) \to \begin{cases} 
\phi & \text{as $C_+ \ni z \to z_0$}, \\
\phi + \pi \alpha & \text{as $C_- \ni z \to z_0$}, 
\end{cases}
\end{align}
for some $\phi$, and such that $\{z_{0}+\rho e^{i(\phi+\frac{\pi \alpha}{2})}:\rho\in (0,\rho_{0})\}\subset \Omega$ for some small enough $\rho_{0}>0$, see Figure \ref{cornerfigure}. In \eqref{arg def angle}, the branch is chosen so that $\arg(\cdot-z_{0})$ is continuous in $(\bar{\Omega} \cap B_{\rho_{0}}(z_{0}))\setminus \{z_{0}\}$. 
The situation where $\Omega$ has an inward-pointing cusp is covered by the case $\alpha=2$. 
We say that the corner is {\it H\"older-$C^1$} if the Jordan arcs $C_\pm$ can be chosen to be of class $C^{1,\gamma}$ for some $\gamma>0$.

If $\Omega \subset \C^*$ is an open set and $\mu$ is a non-negative measure on $\Omega$, then the balayage $\nu := \mathrm{Bal}(\mu,\partial \Omega)$ of $\mu$ onto $\partial \Omega$ is the measure on $\partial \Omega$ defined whenever it exists by
\begin{align}\label{nuharmonicmeasure}
\nu(E) = \int_\Omega \omega(z, E, \Omega) d\mu(z) \qquad \text{for Borel subsets $E$ of $\partial \Omega$},
\end{align}
where $\omega(z, E, \Omega)$ is the harmonic measure of $E$ at $z$ in $\Omega$. We assume that $\omega(\cdot;E,\Omega)$ is $\mu$-measurable, so that the integral \eqref{nuharmonicmeasure} is well-defined. (We recall that $\omega(z, E, \Omega)$ can be interpreted as the probability that a Brownian motion starting at $z$ exits $\Omega$ at a point in $E$, see e.g. \cite[p. 73]{GM2005}.)

Let $d^2z = dxdy$ denote the Lebesgue measure on $\C$.
If $\mu$ is a non-negative measure on $\Omega$ and $b > 0$, then we write 
\begin{align}\label{muc0o1}
d\mu(z) = (1 +o(1)) |z-z_{0}|^{2b-2}d^{2}z \qquad \text{as $z\to z_0 \in \partial \Omega$}
\end{align}
if for every $\epsilon > 0$, there exists a radius $\rho_0 >0$ such that
\begin{align}\label{muc0o1eps}
\bigg|\mu(A) - \int_A |z-z_{0}|^{2b-2}d^{2}z\bigg| \leq \epsilon \int_A |z-z_{0}|^{2b-2}d^{2}z
\end{align}
for all measurable subsets $A$ of $\Omega \cap B_{\rho_0}(z_0)$.
Similarly, we write
$$d\mu(z) \asymp |z-z_{0}|^{2b-2}d^{2}z \qquad \text{as $z\to z_0 \in \partial \Omega$}$$
if there exists a $\rho_0 >0$ and constants $c_1, c_2 > 0$ such that
\begin{align}\label{c1muc2}
c_1 \int_A |z-z_{0}|^{2b-2}d^{2}z \leq \mu(A) \leq c_2 \int_A |z-z_{0}|^{2b -2}d^{2}z
\end{align}
for all measurable subsets $A$ of $\Omega \cap B_{\rho_0}(z_0)$.
If $f(r)$ and $g(r)$ are two positive functions defined for all small enough $r > 0$, then we write 
$$f(r) \asymp g(r) \qquad \text{as $r \to 0$}$$
if there exists an $r_0 > 0$ and constants $c_1, c_2 > 0$ such that 
$$c_1 g(r) \leq f(r) \leq c_2 g(r) \qquad \text{for all $r \in (0, r_0)$}.$$

\subsection{A single corner}
We first state our main result in the case of a single corner at $z_0$. The result will then be generalized in Section \ref{subsection:main results for several corners} to the case of an arbitrary finite number of corners at $z_0$. In the case of a single corner, our main result is the following. 
We use $C>0$ and $c>0$ to denote generic strictly positive constants. 

\begin{theorem}[A single corner]\label{mainth}
Let $\Omega$ be a finitely connected Jordan domain in $\C^*$. Let $0 < \alpha \leq 2$ and suppose $\Omega$ has a H\"older-$C^1$ corner of opening $\pi \alpha$ at a point $z_0 \in \partial \Omega \cap \C$.  
Let $\mu$ be a non-negative measure of finite total mass on $\Omega$ such that $d\mu(z) = (1 +o(1)) |z-z_{0}|^{2b-2}d^{2}z$ as $z\to z_0$ for some $b > 0$. 
For every $\epsilon > 0$, the balayage $\nu := \mathrm{Bal}(\mu,\partial \Omega)$ of $\mu$ onto $\partial \Omega$ obeys the following inequalities for all sufficiently small $r > 0$:
\begin{align}\nonumber
 (1-\epsilon) \frac{\tan(\pi \alpha b)}{2b^2} r^{2b} & \leq \nu(\partial \Omega \cap B_r(z_0)) \leq (1+\epsilon) \frac{\pi \alpha}{2b} \bigg(1  + \frac{16 b}{\pi (\frac{1}{\alpha} - 2b)}\bigg)  r^{2b} && \text{if $2b < \frac{1}{\alpha}$}, 
	\\ \nonumber 
 (1-\epsilon) \frac{2}{\pi b} r^{2b} \log(\tfrac{1}{r})
& \leq
 \nu(\partial \Omega \cap B_r(z_0))
\leq (1+\epsilon) \frac{4}{b} r^{2b} \log(\tfrac{1}{r}) &&  \text{if $2b= \frac{1}{\alpha}$}, 	\\ \label{nubounds}
c r^{\frac{1}{\alpha}} & \leq
 \nu(\partial \Omega \cap B_r(z_0))
\leq C r^{\frac{1}{\alpha}} & &  \text{if $2b > \frac{1}{\alpha}$}.
\end{align}
In particular, if $d\mu(z) \asymp |z-z_{0}|^{2b-2}d^{2}z$ as $z\to z_0$, then
\begin{align}\label{nuestimate}
\nu(\partial \Omega \cap B_r(z_0)) \asymp \begin{cases}
r^{2b} & \text{if $2b < \frac{1}{\alpha}$}, \\
r^{2b}\log \frac{1}{r} &  \text{if $2b= \frac{1}{\alpha}$}, \\
r^{\frac{1}{\alpha}} &  \text{if $2b > \frac{1}{\alpha}$},
\end{cases}
\qquad \text{as $r \to  0$.}
\end{align}

\end{theorem}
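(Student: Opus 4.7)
The starting point is (\ref{nuharmonicmeasure}), which gives
\begin{equation*}
\nu(\partial\Omega \cap B_r(z_0)) = \int_\Omega \omega(z,\partial\Omega\cap B_r(z_0),\Omega)\,d\mu(z).
\end{equation*}
I would fix a small radius $\delta>0$ and split this as $\int_{\Omega \cap B_\delta(z_0)} + \int_{\Omega\setminus B_\delta(z_0)}$. For the \emph{far} piece, standard estimates on harmonic measure near a corner of opening $\pi \alpha$ give $\omega(z,\partial\Omega\cap B_r(z_0),\Omega) = O(r^{1/\alpha})$ uniformly in $z \in \Omega \setminus B_\delta(z_0)$ as $r \to 0$; since $\mu$ has finite total mass, the far piece contributes $O(r^{1/\alpha})$, which is absorbed into all three regimes of (\ref{nubounds}).

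For the \emph{near} piece the plan is to compare $\omega(\cdot, \partial\Omega \cap B_r(z_0), \Omega)$ with the harmonic measure $\omega_{S_\alpha}$ in the infinite model sector $S_\alpha := \{w \in \C : 0 < \arg w < \pi \alpha\}$. Because $\partial\Omega$ is H\"older-$C^1$ at $z_0$, a conformal map $\psi$ from a neighborhood of $0$ in $S_\alpha$ onto a neighborhood of $z_0$ in $\Omega$, with $\psi(0) = z_0$, satisfies $\psi(w) = z_0 + c_1 w + O(|w|^{1+\gamma})$ with $\psi'$ H\"older continuous up to the boundary (Kellogg--Warschawski, or Lehman's theorem for corners). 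Combined with the density hypothesis $d\mu(z) = (1+o(1))|z-z_0|^{2b-2}\,d^2z$ and the conformal invariance of harmonic measure, this sandwiches the near piece, with multiplicative errors $1\pm\epsilon$, between model integrals on $S_\alpha$ with slightly inflated/deflated outer radius $\delta'$ and slightly inflated/deflated inner radius $(1\pm o(1))r/|c_1|$.

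The model integral $I_\alpha(r) := \int_{S_\alpha} \omega_{S_\alpha}(w,\partial S_\alpha \cap B_r(0))\,|w|^{2b-2}\,d^2w$ is then evaluated via the conformal map $w \mapsto \zeta = w^{1/\alpha}$, which sends $S_\alpha$ onto the upper half-plane $\mathbb{H}$ and $\partial S_\alpha \cap B_r(0)$ onto the interval $(-r^{1/\alpha}, r^{1/\alpha})$. With the Jacobian $d^2w = \alpha^2|\zeta|^{2\alpha-2}\,d^2\zeta$ and the explicit arctangent formula for $\omega_{\mathbb{H}}$, the scaling $\zeta \mapsto r^{1/\alpha}\zeta$ pulls out the factor $r^{2b}$. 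The resulting dimensionless integral converges iff $2b < 1/\alpha$, diverges logarithmically when $2b = 1/\alpha$, and diverges like $(\delta/r)^{1/\alpha - 2b}$ when $2b > 1/\alpha$; this structural dichotomy produces the three regimes in (\ref{nubounds}). In the convergent case, doing the angular integral first (or equivalently integrating by parts in $s$ after passing to polar coordinates) reduces $I_\alpha(r)$ to
\begin{equation*}
\frac{r^{2b}}{\alpha b \pi}\int_0^\infty s^{2\alpha b-1}\log\Bigl|\tfrac{s+1}{s-1}\Bigr|\,ds;
\end{equation*}
a series expansion of $\arctanh$ together with the partial-fraction expansion of $\tan$ yields the closed form $\frac{\tan(\pi\alpha b)}{2b^2}\,r^{2b}$, reproducing the sharp prefactor in the lower bound of (\ref{nubounds}).

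For the upper bounds, the contribution of the sector integral over $S_\alpha \cap (B_\delta \setminus B_r(0))$ is estimated using the crude inequality $\omega_{S_\alpha}(w, \partial S_\alpha \cap B_r(0)) \leq \frac{4 r^{1/\alpha}}{\pi |w|^{1/\alpha}}$, obtained from the large-$|\zeta|$ asymptotics of the arctangents; explicit integration yields the prefactor $\frac{16 b}{\pi(\frac{1}{\alpha} - 2b)}$ in the subcritical regime, the $\frac{4}{b}\log(\frac{1}{r})$ term in the critical regime, and the $O(r^{1/\alpha})$ bound in the supercritical regime. The main obstacle, beyond the model computation, is the quantitative bookkeeping needed to keep multiplicative errors of the form $1 + o(1)$ rather than $O(1)$: the H\"older-$C^1$ comparison with the model sector must be precise enough that the $O(|w|^\gamma)$ perturbation in $\psi$ contributes an error vanishing as $r \to 0$, and the $(1+o(1))$ factor in the density of $\mu$ must be tracked uniformly in $w$ through the integral on the critical scale $|w| \sim r$.
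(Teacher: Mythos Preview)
Your architecture---split at a fixed radius, show the far piece is $O(r^{1/\alpha})$, and reduce the near piece to a sector model---matches the paper's, and your direct evaluation of the infinite-sector integral $I_\alpha(r)=\frac{\tan(\pi\alpha b)}{2b^2}r^{2b}$ is a nice self-contained alternative to the paper's route, which instead quotes the exact balayage formula on a \emph{finite} circular sector from \cite{C2023}. One bookkeeping point there: domain monotonicity only lets you compare $\omega_\Omega$ from below with the harmonic measure of a \emph{finite} truncated sector (a subdomain of $\Omega$), so to recover the infinite-sector constant for the lower bound you still owe a short argument (e.g.\ via the strong Markov property) that the finite- and infinite-sector harmonic measures differ by $O(r^{1/\alpha})$ uniformly on the truncated region.

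The substantive divergence is in the upper bound, and here there is a gap as written. A \emph{local} conformal map $\psi$ from a sector neighborhood onto $\Omega\cap B_\delta(z_0)$ gives, via conformal invariance, only the identity $\omega_{\Omega\cap B_\delta}(\psi(w),\psi(E))=\omega_{\text{sector nbhd}}(w,E)$; combined with $\omega_{\Omega\cap B_\delta}\le\omega_\Omega$ this is the \emph{lower} direction, not a sandwich. To bound $\omega_\Omega$ from above by conformal means you must first enlarge the domain---pass to the simply connected hull $\Omega_1\supset\Omega$ bounded by the component of $\partial\Omega$ through $z_0$, use $\omega_\Omega\le\omega_{\Omega_1}$, and transport by the \emph{global} Riemann map $\mathbb{H}\to\Omega_1$ (whose corner expansion you are already invoking). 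The paper bypasses this entirely: it obtains the pointwise bound
\[
\omega(z,\partial\Omega\cap B_r,\Omega)\le\frac{8}{\pi}\exp\Bigl(-\pi\int_r^{\min(|z|,\rho_0)}\frac{d\rho}{\rho\Theta(\rho)}\Bigr)
\]
directly on $\Omega$ via an extremal-distance (Ahlfors--Carleman) estimate, where $\rho\Theta(\rho)$ is the length of the arc $\Omega\cap\partial B_\rho(z_0)$. Since $\Theta(\rho)\le\pi\alpha(1+C\rho^\gamma)$ by the H\"older-$C^1$ hypothesis, this gives $\omega\le\frac{8}{\pi}(r/|z|)^{1/\alpha}(1+o(1))$, which is then integrated over the three shells $|z|<r$, $r<|z|<\rho_0$, $|z|>\rho_0$ to produce the stated upper constants. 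This extremal-length route needs no simply-connected reduction and is also what makes the extension to several corners at one point (Theorem~\ref{mainth2}) go through.
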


The proof of Theorem \ref{mainth} is presented in Section \ref{proofsec}.

\begin{remark}
Theorem \ref{mainth} implies in particular that  the density of $\nu$ never blows up or vanishes at ``standard" points where $\partial \Omega$ admits a tangent ($\alpha=1$) and where $\frac{d\mu(z)}{d^{2}z} \asymp 1$ ($b=1$). 
\end{remark}

\begin{remark}
In \cite[Theorem 2.16 (i) and Remark 2.18]{C2023}, the balayage measure $\nu$ was computed exactly in the case when $\Omega = \{re^{i\theta}:0<r<a, 0<\theta<\pi \alpha\}$ is a circular sector of radius $a > 0$ and opening angle $\pi \alpha \in (0,2\pi]$, and $\mu$ is the measure $d\mu(z) = |z|^{2b-2}d^{2}z$ with $b>0$. We will use this fact to obtain the lower bounds on $\nu(\partial \Omega \cap B_r(z_0))$ in (\ref{nubounds}). 
\end{remark}

\begin{remark}
Let $c_{1},c_{2}$ be the largest possible constants and $C_{1},C_{2}$ the smallest possible constants such that, for every $\epsilon >0$ and $\nu$ as in Theorem \ref{mainth}, there exists $r_0 >0$ such that for all $r \in (0, r_0)$, the inequalities
\begin{align}\nonumber
 (1-\epsilon) c_{1} r^{2b} & \leq \nu(\partial \Omega \cap B_r(z_0)) \leq (1+\epsilon) C_{1}  r^{2b} && \text{if $2b < \frac{1}{\alpha}$}, 
	\\ \nonumber 
 (1-\epsilon) c_{2} r^{2b} \log(\tfrac{1}{r})
& \leq
 \nu(\partial \Omega \cap B_r(z_0))
\leq (1+\epsilon) C_{2} r^{2b} \log(\tfrac{1}{r}) &&  \text{if $2b= \frac{1}{\alpha}$},
\end{align}
hold. The inequalities \eqref{nubounds} imply that
\begin{align*}
\frac{\tan(\pi \alpha b)}{2b^2} \leq c_{1}, \qquad C_{1} \leq \frac{\pi \alpha}{2b} \bigg(1  + \frac{16 b}{\pi (\frac{1}{\alpha} - 2b)}\bigg), \qquad  \frac{2}{\pi b} \leq c_{2}, \qquad C_{2} \leq \frac{4}{b}.
\end{align*}
On the other hand, it follows from \cite[Remark 2.18]{C2023} that 
\begin{align*}
c_{1} \leq \frac{\tan(\pi \alpha b)}{2b^2}, \qquad c_{2} \leq \frac{2}{\pi b}.
\end{align*}
This implies that the constants $\frac{\tan(\pi \alpha b)}{2b^2}$ and $\frac{2}{\pi b}$ appearing on the left-hand sides in \eqref{nubounds} are the best possible. Finding $C_{1}$ and $C_{2}$ is an interesting problem for future research.
\end{remark}

%\todo[inline]{If you don't like the above remark we can reformulate or make it shorter (or to merge the above two remarks). Would also be nice to find an explicit example which would show that $C_{1}>c_{1}$ and/or $C_{2}>c_{2}$. The examples given in \cite[Theorems 2.27 and 2.32]{C2023} for the triangle and the rectangle do not allow to show that. In fact it suggests that $C_{1}=c_{1}$ and $C_{2}=c_{2}$, but perhaps risky to make a conjecture. EDIT: I also checked for ALL THREE corners of the circular sector (not only the origin), and it also suggests that $C_{2}=c_{2}$... interesting}
\subsection{Multiple corners}\label{subsection:main results for several corners}
The following theorem treats the case when $\Omega$ has multiple corners at $z_0$, see Figure \ref{figure:multicorner}.

\begin{theorem}[Multiple corners at the same point]\label{mainth2}
Let $\Omega$ be an open subset of $\C^*$, let $z_0 \in \partial \Omega \cap \C$, and let $m \geq 1$ be an integer. Suppose there is a radius $\rho_0 > 0$ such that

\begin{enumerate}[$(i)$]
\item the open set $\Omega \cap B_{\rho_0}(z_0)$ has $m$ connected components $\{U_j\}_1^m$ such that $\bar{U}_j \cap \bar{U}_k = \{z_0\}$ whenever $j \neq k$ and each component $U_j$ has a H\"older-$C^1$ corner at $z_0$ of opening angle $\pi \alpha_j \in (0,2\pi]$, and

\item $\Omega \setminus \overline{B_{\rho_0}(z_0)}$ is a disjoint union of finitely connected Jordan domains.
\end{enumerate}
Let $\mu$ be a non-negative measure of finite total mass on $\Omega$ such that $d\mu(z) = (1 +o(1)) |z-z_{0}|^{2b-2}d^{2}z$ as $z\to z_0$ for some $b > 0$. 
Let
$$\alpha := \displaystyle{\max_{1\leq j \leq m}} \alpha_j.$$

For every $\epsilon > 0$,  the balayage $\nu := \mathrm{Bal}(\mu,\partial \Omega)$ of $\mu$ onto $\partial \Omega$ obeys the following  inequalities for all sufficiently small $r > 0$:
\begin{align}\nonumber
(1-\epsilon) \sum_{j=1}^m \frac{\tan(\pi \alpha_j b)}{2b^2} r^{2b} & \leq \nu(\partial \Omega \cap B_r(z_0)) \leq (1+\epsilon) \sum_{j=1}^m \frac{\pi \alpha_j}{2b} \bigg(1  + \frac{16 b}{\pi (\frac{1}{\alpha_j} - 2b)}\bigg)  r^{2b} & & \text{if $2b < \frac{1}{\alpha}$}, 
	\\ \nonumber 
m_{\alpha} (1-\epsilon) \frac{2}{\pi b} r^{2b} \log(\tfrac{1}{r})
& \leq
 \nu(\partial \Omega \cap B_r(z_0))
\leq m_{\alpha} (1+\epsilon) \frac{4}{b} r^{2b} \log(\tfrac{1}{r}) & & \hspace{-1cm} \text{if $2b= \frac{1}{\alpha}$}, 	\\ \label{numultiplebounds}
c r^{\frac{1}{\alpha}} & \leq
 \nu(\partial \Omega \cap B_r(z_0))
\leq C r^{\frac{1}{\alpha}} & & \hspace{-1cm} \text{if $2b > \frac{1}{\alpha}$},
\end{align}
where $m_{\alpha}$ is the number of $\alpha_j$ such that $\alpha_j = \alpha$, i.e., the number of corners with the largest opening angle.
In particular, if $d\mu(z) \asymp |z-z_{0}|^{2b-2}d^{2}z$ as $z\to z_0$, then (\ref{nuestimate}) holds.
\end{theorem}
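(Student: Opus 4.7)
The plan is to reduce Theorem \ref{mainth2} to Theorem \ref{mainth} by localizing to each connected component $U_j$ of $\Omega \cap B_{\rho_0}(z_0)$ and applying the single-corner result on each piece. The starting observation is that, for all $r < \rho_0$, $\partial \Omega \cap B_r(z_0) = \bigsqcup_{j=1}^{m} \bigl(\partial U_j \cap B_r(z_0)\bigr)$, so that
\begin{equation*}
\nu(\partial \Omega \cap B_r(z_0)) = \sum_{j=1}^{m} \nu(\partial U_j \cap B_r(z_0)) = \sum_{j=1}^{m} \int_{\Omega} \omega(z, \partial U_j \cap B_r(z_0), \Omega)\, d\mu(z).
\end{equation*}
Each of these $m$ terms will be compared with the intrinsic balayage $\nu_j := \mathrm{Bal}(\mu|_{U_j}, \partial U_j)$. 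After shrinking $\rho_0$ if necessary, each $U_j$ is a Jordan domain bounded by the two H\"older-$C^1$ arcs emanating from $z_0$ together with a sub-arc of $\partial B_{\rho_0}(z_0)$, hence carries a single corner of opening $\pi\alpha_j$ at $z_0$, and Theorem \ref{mainth} is directly applicable to it.

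For the lower bound, any Brownian path starting at $z \in U_j$ that exits $U_j$ through $\partial U_j \cap B_r(z_0) \subset \partial \Omega$ simultaneously exits $\Omega$ at the same point, so $\omega(z, \partial U_j \cap B_r(z_0), \Omega) \geq \omega(z, \partial U_j \cap B_r(z_0), U_j)$ for such $z$. Restricting the outer integral to $U_j$ and summing over $j$ yields
\begin{equation*}
\nu(\partial \Omega \cap B_r(z_0)) \geq \sum_{j=1}^{m} \nu_j(\partial U_j \cap B_r(z_0)),
\end{equation*}
and the lower bounds in \eqref{numultiplebounds} follow from Theorem \ref{mainth} applied to each $U_j$. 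In particular, when $2b = \tfrac{1}{\alpha}$ only the $m_\alpha$ indices with $\alpha_j = \alpha$ produce a logarithmic term, since the remaining indices satisfy $2b < \tfrac{1}{\alpha_j}$ and contribute only $O(r^{2b})$ terms of strictly smaller order; this explains the appearance of the prefactor $m_\alpha$.

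For the upper bound, the strong Markov property gives, for $z \in U_j$,
\begin{equation*}
\omega(z, \partial U_j \cap B_r(z_0), \Omega) = \omega(z, \partial U_j \cap B_r(z_0), U_j) + \int_{\partial U_j \setminus \partial \Omega} \omega(w, \partial U_j \cap B_r(z_0), \Omega)\, d\omega(z, w, U_j),
\end{equation*}
whereas for $z \in U_k$ with $k \neq j$ the first term vanishes (since $\partial U_j \cap \partial U_k = \{z_0\}$ carries no harmonic measure), and for $z \in \Omega \setminus \overline{B_{\rho_0}(z_0)}$ any Brownian path to $\partial U_j \cap B_r(z_0)$ must first cross $\partial B_{\rho_0}(z_0)$. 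Combining this with the corner estimate $\omega(w, \partial \Omega \cap B_r(z_0), \Omega) \leq C r^{1/\alpha}$ valid for all $w$ at distance $\geq \rho_0/2$ from $z_0$, integration against $d\mu$ and summation over $j$ produce
\begin{equation*}
\nu(\partial \Omega \cap B_r(z_0)) \leq \sum_{j=1}^{m} \nu_j(\partial U_j \cap B_r(z_0)) + C\mu(\Omega)\, r^{1/\alpha},
\end{equation*}
and the remainder is absorbed into the leading behavior in each of the three cases of \eqref{numultiplebounds}.

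The main technical difficulty will be to establish the uniform estimate $\omega(w, \partial \Omega \cap B_r(z_0), \Omega) \leq C r^{1/\alpha}$ for $w$ bounded away from $z_0$. I expect to prove it by enclosing $\Omega$ near $z_0$ inside a union of circular sectors of opening angles slightly exceeding $\pi \alpha_j$ and invoking monotonicity of harmonic measure together with the explicit formula obtainable from the conformal map $z \mapsto (z-z_0)^{1/\alpha_j}$. Once this barrier estimate is in place, the decomposition above reduces the multi-corner situation to $m$ independent single-corner problems plus a negligible remainder, and the bounds in \eqref{numultiplebounds} follow directly from Theorem \ref{mainth}.
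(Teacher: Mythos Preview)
Your overall strategy coincides with the paper's: prove a decoupling lemma showing
\[
\sum_{j=1}^m \nu_j(\partial U_j \cap B_r(z_0)) \leq \nu(\partial\Omega \cap B_r(z_0)) \leq \sum_{j=1}^m \nu_j(\partial U_j \cap B_r(z_0)) + C r^{1/\alpha},
\]
and then apply Theorem \ref{mainth} to each $U_j$. The paper's Lemma \ref{decouplinglemma} is exactly this, and the Brownian-motion/strong-Markov reasoning you give is essentially a rephrasing of the paper's argument via the event $E_z=\{\text{the path hits }\Omega\cap\partial B_{\rho_0}(z_0)\}$.

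The one substantive difference is how the key estimate $\omega(w,\partial U_j\cap B_r(z_0),\Omega)\le C r^{1/\alpha_j}$ for $|w-z_0|\ge\rho_0$ is obtained. The paper does \emph{not} use a sector enclosure; it uses an extremal-length argument (Lemma \ref{extremaldistancelemma}, based on \cite[Theorem H.7]{GM2005}) giving
\[
\omega(w,\partial U_j\cap B_r(z_0),\Omega)\le \frac{8}{\pi}\exp\Bigl(-\pi\!\int_r^{\rho_0}\frac{d\rho}{\rho\,\Theta_j(\rho)}\Bigr),
\]
which is robust to the global geometry of $\Omega$. Your proposed route via ``enclosing $\Omega$ near $z_0$ in circular sectors and invoking monotonicity'' needs care: monotonicity of harmonic measure under domain enlargement requires the target set to remain on the boundary of the larger domain, but $\partial U_j\cap B_r(z_0)$ lies strictly \emph{inside} any enclosing sector, not on its boundary. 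Moreover, the point $w$ lives in the full domain $\Omega$ (which is not contained in any sector), so a naive comparison does not apply. A workable fix is to first replace the target by the cross-cut $U_j\cap\partial B_r(z_0)$ via the maximum principle (as the paper does with the arc $E$), and then either run the extremal-length estimate or iterate the Markov property to trap the path in $U_j$ before comparing with a sector. Either way this step requires more than the one-line monotonicity you sketch, and the paper's extremal-distance approach is the cleaner tool here.
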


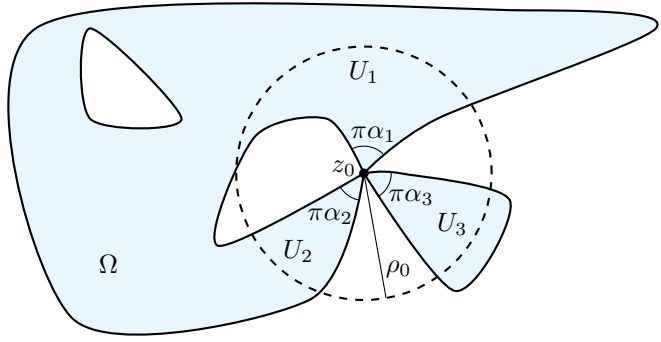
\begin{figure}
\begin{center}
\begin{tikzpicture}[scale = 2.4]
  \coordinate (origin) at (0, 0);

  \coordinate (z0) at (0,0);
  \coordinate (c2) at (0.4, 0.3);
  \coordinate (c3) at (1.6, 0.8);
  \coordinate (c4) at (0.9, 0.9);
  \coordinate (c5) at (-1.8, 0.8);
  \coordinate (c52) at (-1.6, -0.8);
  \coordinate (c53) at (-0.3, -0.7);
  \coordinate (c61) at (-0.8, -0.4);
  \coordinate (c62) at (-0.6, 0.2);
  \coordinate (c6) at (-0.2, 0.3);
  \coordinate (c7) at (0.4, -0.45);

%Draw \Omega
 % \draw[black, thick, fill=cyan!8] plot [smooth] coordinates {(z0) (c2) (c3) (c4) (c5) (c52) (c53) (z0) (c61) (c6) (z0)};
 \draw[black, thick, fill=cyan!8] plot [smooth] coordinates {(z0) (c2) (c3) (c4) (c5) (c52) (c53) (z0)};
 \draw[black, thick, fill=white!8] plot [smooth] coordinates {(z0) (c61) (c62) (c6) (z0)};
 
\coordinate (d0) at (-1, 0.3);
\coordinate (d1) at (-1.5, 0.3);
\coordinate (d2) at (-1.5, 0.8);

\coordinate (e1) at (0.5, -0.65);
\coordinate (e2) at (0.8, -0.15);
\coordinate (e3) at (0.2, 0);
\draw[black, thick, fill=cyan!8] plot [smooth] coordinates {(z0) (e1) (e2) (e3) (z0)};

 \draw[black, thick, fill=white!8] plot [smooth cycle] coordinates {(d0) (d1) (d2)};

\node at (-1.4,-0.5){$\Omega$};

%  \node at (0,1.02){$\partial\Omega$};
%  \node at (-1.3,0.32){$\partial\Omega$};

%Draw z_0
\fill (origin) circle (0.75pt);
\node at (-0.11,0.03){$z_0$};

\draw[black] (z0)+(46:0.15) arc (46:118:0.15);
\node at (76:0.22){$\pi \alpha_{1}$};
  
\draw[black] (z0)+(-103:0.15) arc (-103:-148:0.15);
\node at (-128:0.3){$\pi \alpha_{2}$};

\draw[black] (z0)+(4:0.15) arc (4:-57:0.15);
\node at (-27:0.29){$\pi \alpha_{3}$};

\draw[dashed, thick] (z0) circle (0.7cm);
\draw (z0)--(-80:0.7);
\node at (0.19,-0.53) {$\rho_{0}$};
%\node at (-90:0.8) {$\partial B_{\rho_{0}}(0)$};
\node at (0,0.56) {$U_{1}$};
\node at (-130:0.56) {$U_{2}$};
\node at (-30:0.56) {$U_{3}$};
\end{tikzpicture} 
 \caption{\label{figure:multicorner} Illustration of an open set $\Omega$ with three corners at $z_0$.}
 \end{center}
\end{figure}

The proof of Theorem \ref{mainth2} is presented in Section \ref{proofsec2}. The main idea of the proof is to show that the various corners decouple up to terms of $O(r^{1/\alpha})$ and then apply Theorem \ref{mainth} to each of the corners.

%\begin{remark}[Extension to disconnected sets]
%Theorem \ref{mainth} is stated under the assumption that $\Omega$ is a finitely connected Jordan domain in $\C^*$ such that $\partial \Omega$ has a corner at $z_0$. In fact, the theorem remains true if $\Omega = \Omega_1 \cup U$ is the union of a finitely connected Jordan domain $\Omega_1$ in $\C^*$ and an open subset $U \subset \C^*$, provided that the intersection $\bar{U} \cap \bar{\Omega}_{1}$ is empty and $z_0 \in \partial \Omega_1$. Indeed, if $z \in U$ and $E \subset \partial \Omega_1$, then 
%$\omega(z, E, \Omega) = 0$ because $z$ and $E$ belong to different components of $\bar{\Omega}$. Hence, by (\ref{nuharmonicmeasure}), the restriction of the balayage measure $\mathrm{Bal}(\mu,\partial \Omega)$ to $\partial \Omega_1$ only depends on the restriction of $\mu$ to $\Omega_1$:
%\begin{align*}
%\mathrm{Bal}(\mu,\partial \Omega)(E) 
%= \int_{\Omega_1 \cup U} \omega(z, E, \Omega) d\mu(z) 
%= \int_{\Omega_1} \omega(z, E, \Omega) d\mu(z) 
%= \mathrm{Bal}(\mu|_{\Omega_1},\partial \Omega)(E) 
%\end{align*} 
%for Borel subsets $E \subset \partial \Omega_1$, from which the stated extension immediately follows.
%\end{remark}

\section{Local structure of $\Omega$ near $z_0$}\label{Omegasec}

We consider the structure of $\Omega$ near the corner at $z_0$. We first treat the case of a single corner.

\subsection{A single corner}
Fix $\alpha \in (0, 2]$ and let $\Omega$ be a finitely connected Jordan domain in $\C^*$ such that $\Omega$ has a H\"older-$C^1$ corner of opening $\pi \alpha$ at $z_0 \in \C$. 
After applying a translation and a rigid rotation, we may assume that $z_0 = 0$ and that 
\begin{align}\label{argz0pialpha}
\arg z \to \begin{cases} 0 & \text{as $C_+ \ni z \to 0$}, \\
\pi \alpha & \text{as $C_- \ni z \to 0$},
\end{cases}
\end{align}
where $C_\pm \subset \partial \Omega$ are curves of class $C^{1,\gamma}$ for some $\gamma>0$ ending at $0$ and lying on different sides of $0$.

The next lemma shows that, for sufficiently small $\rho > 0$, $\Omega \cap B_\rho(0)$ is a small deformation of $S_\rho$, where
\begin{align}\label{Srhodef}
S_\rho := \{z \in \C : 0 < \arg z < \pi \alpha, \; 0 < |z| < \rho\}
\end{align}
is a circular sector of angle $\pi \alpha$ and radius $\rho$.

\begin{lemma}\label{nearcornerlemma}
There exists a radius $\rho_0 > 0$ such that the following hold, possibly after shortening the curves $C_\pm$:
\begin{enumerate}[$(i)$]

\item $\overline{B_{\rho_0}(0)} \cap \partial \Omega= C_+ \cup C_-$.

\item $C_+ \cap C_- = \{0\}$.

\item $C_+ = w_+([0, \rho_0])$ and $C_- = w_-([0, \rho_0])$ where the curves $w_\pm:[0, \rho_0] \to \C$ are $C^{1}$ on $(0,\rho_{0})$ and such that $|w_\pm(r)| = r$ for $r \in [0, \rho_0]$, i.e., $w_\pm$ are parametrizations of $C_\pm$ with the distance to the origin as parameter. Moreover, 
\begin{subequations}\label{argwplusminus}
\begin{align}\label{argwplus}
&\arg w_+(r) = O(r^{\gamma}) && \text{as $r \to 0$},
	\\ \label{argwminus}
&\arg w_-(r) = \pi \alpha + O(r^{\gamma})  && \text{as $r \to 0$},
\end{align}
\end{subequations}
where the branch is chosen so that $\arg(\cdot)$ is continuous in $(\bar{\Omega} \cap B_{\rho_{0}}(0))\setminus \{0\}$, and
\begin{subequations}\label{wpm der near 0}
\begin{align}
& w_{+}'(r) = 1+O(r^{\gamma}) & & \mbox{as } r \to 0, \\
& w_{-}'(r) = e^{i \pi \alpha}+O(r^{\gamma}) & & \mbox{as } r \to 0.
\end{align}
\end{subequations}
\item For each $0 < r \leq \rho_0$, it holds that $\Omega \cap \{|z| = r\} = J_r$ where $J_r$ is the circular arc
\begin{align}\label{Jrdef}
J_r := \{r e^{i\theta} : \arg w_+(r) < \theta < \arg  w_-(r)\}.
\end{align}

\item $J_r$ has length $r\Theta(r)$ where
\begin{align}\label{Thetaestimate}
\Theta(r) \leq \pi \alpha(1 + C_1 r^{\gamma}), \qquad 0 < r \leq \rho_0,
\end{align}
for some constant $C_1 > 0$.
\end{enumerate}

\end{lemma}
\begin{proof}
Since $C_+$ is $C^{1,\gamma}$ and tangent at $0$ to the ray $\{x \geq 0\}$, we can write $C_+$ as 
\begin{align*}
C_+ = \{ x(t) + i y(t) : 0 \leq t \leq t_1\}, 
\end{align*}
where $t_1>0$, and with $x,y \in C^{1,\gamma}([0,t_{1}])$ satisfying 
$x’(t) = x’(0) + O(t^\gamma)$, $y’(t) = O(t^\gamma)$, and $x’(0)>0$.
Rescaling $t$ if necessary, we may assume that $x’(0)=1$.
Integration of $x’(t) = 1 + O(t^\gamma)$ and $y’(t) = O(t^\gamma)$ then gives
$x(t) = t(1 + O(t^{\gamma}))$ and $y(t) = O(t^{1+\gamma})$.
Defining
\begin{align*}
r_+(t) := |x(t) + i y(t)| = x(t) \sqrt{ 1 + \left( \frac{y(t)}{x(t)} \right)^2 },
\end{align*}
we have
$$r_+’(t) = \frac{x'(t)+\frac{y(t) y'(t)}{x(t)}}{\sqrt{1 + \frac{y(t)^2}{x(t)^2}}} = 1+O(t^{\gamma})$$ 
as $t\to 0$. Hence, shrinking $t_1>0$ if necessary, we may assume that $r_+(t)$ is strictly increasing for $t \in [0,t_1]$. Let $t(r)$ for $r \in [0, r_1]$, where $r_1  = r_+(t_1)$, be the inverse of this function. The parametrization
\begin{align*}
w_+(r) := x(t(r)) + i y(t(r)), \quad r \in [0, r_1],
\end{align*}
satisfies $|w_+(r)| = r$ for all $r\in [0, r_1]$. Moreover, 
\begin{align*}
& \arg w_+(r) = \arctan \frac{y(t(r))}{x(t(r))} = O(t(r)^\gamma) = O(r^\gamma), & & r \to 0, \\
& w_{+}'(r) = x'(t(r))t'(r) + i y'(t(r))t'(r) = 1+O(r^{\gamma}), & & r \to 0.
\end{align*}
This completes the proof of the existence of a parametrization $w_{+}$ satisfying assertion $(iii)$; the proof for $w_{-}$ is similar and we omit it. Moreover, $\partial \Omega \setminus \big(w_+([0,\rho_0)) \cup w_-([0,\rho_0))\big)$ is a closed set in $\C^*$ which does not contain $0$. Therefore, shrinking $\rho_0$ if necessary, we may assume that $\partial \Omega \cap \overline{B_{\rho_0}(0)} = C_+ \cup C_-$ and that $C_+ \cap C_- = \{0\}$. Furthermore, for each $0 < r < \rho_0$, $\Omega \cap \{|z| = r\} = J_r$ where $J_r$ is defined by (\ref{Jrdef}). This completes the proof of assertions $(i)$-$(iv)$. 
From (\ref{argwplusminus}), we infer that the circular arc $J_r$ has 
length $r\Theta(r)$ where $\Theta(r) = \pi \alpha + O(r^{\gamma})$ as $r \to 0$, so shrinking $\rho_0$ if necessary, we obtain also $(v)$.
\end{proof}

\subsection{Multiple corners}\label{multiplestructuresubsec}

\begin{figure}
\begin{center}
\begin{tikzpicture}[scale = 2.4]
  \coordinate (origin) at (0, 0);

  \coordinate (z0) at (0,0);
  \coordinate (c2) at (0.4, 0.3);
  \coordinate (c3) at (1.6, 0.8);
  \coordinate (c4) at (0.9, 0.9);
  \coordinate (c5) at (-1.8, 0.8);
  \coordinate (c52) at (-1.6, -0.8);
  \coordinate (c53) at (-0.3, -0.7);
  \coordinate (c61) at (-0.8, -0.4);
  \coordinate (c62) at (-0.6, 0.2);
  \coordinate (c6) at (-0.2, 0.3);
  \coordinate (c7) at (0.4, -0.45);

%Draw \Omega
 % \draw[black, thick, fill=cyan!8] plot [smooth] coordinates {(z0) (c2) (c3) (c4) (c5) (c52) (c53) (z0) (c61) (c6) (z0)};
 \draw[black, thick, fill=cyan!8] plot [smooth] coordinates {(z0) (c2) (c3) (c4) (c5) (c52) (c53) (z0)};
 \draw[black, thick, fill=white!8] plot [smooth] coordinates {(z0) (c61) (c62) (c6) (z0)};
 
\coordinate (d0) at (-1, 0.3);
\coordinate (d1) at (-1.5, 0.3);
\coordinate (d2) at (-1.5, 0.8);

\coordinate (e1) at (0.5, -0.65);
\coordinate (e2) at (0.8, -0.15);
\coordinate (e3) at (0.2, 0);
\draw[black, thick, fill=cyan!8] plot [smooth] coordinates {(z0) (e1) (e2) (e3) (z0)};

 \draw[black, thick, fill=white!8] plot [smooth cycle] coordinates {(d0) (d1) (d2)};

%Draw z_0
\fill (origin) circle (0.75pt);
\node at (-0.09,0.03){$0$};

\node at (-1.4,-0.5){$\Omega$};

\draw[black] (z0)+(46:0.15) arc (46:118:0.15);
\node at (76:0.22){$\pi \alpha_{1}$};
  
\draw[black] (z0)+(-103:0.15) arc (-103:-148:0.15);
\node at (-128:0.3){$\pi \alpha_{2}$};

\draw[black] (z0)+(4:0.15) arc (4:-57:0.15);
\node at (-27:0.29){$\pi \alpha_{3}$};

\draw[dashed, thick] (z0) circle (0.7cm);
%\draw (z0)--(0,-0.7);
%\node at (0.1,-0.5) {$\rho_{0}$};
\node at (-90:0.8) {$\partial B_{\rho_{0}}(0)$};
\node at (0,0.56) {$U_{1}$};
\node at (-173:0.56) {$V_{1}$};
\node at (-130:0.56) {$U_{2}$};
\node at (-80:0.56) {$V_{2}$};
\node at (-30:0.56) {$U_{3}$};
\node at (10:0.42) {$V_{0}\equiv V_{3}$};

\end{tikzpicture} 
 \caption{\label{figure:multicorner2} Illustration of Section \ref{multiplestructuresubsec} with $m=3$.}
 \end{center}
\end{figure}
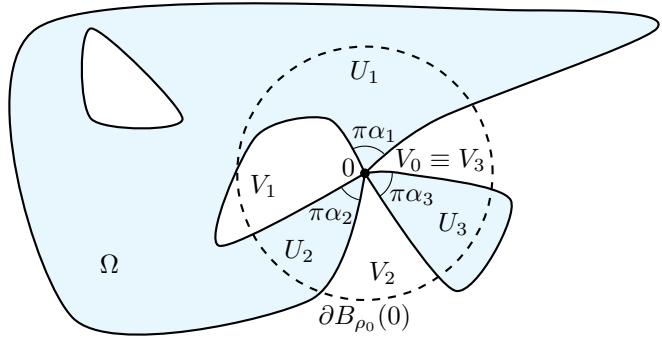

We next consider the case when $\Omega$ has $m \geq 1$ corners at $z_0 = 0$. 
Suppose $\Omega$ is an open subset of $\C^*$ satisfying $(i)$ and $(ii)$ of Theorem \ref{mainth2} with $z_0 = 0$. By applying Lemma \ref{nearcornerlemma} to each of the components $U_j$ of $\Omega \cap B_{\rho_0}(0)$, we see that after shrinking $\rho_0$ if necessary, the following hold (see Figure \ref{figure:multicorner2}):
\begin{enumerate}[$(i)$]
\item $\Omega \cap B_{\rho_0}(0) = \sqcup_{j=1}^m U_j$ and $\bar{\Omega} \cap \overline{B_{\rho_0}(0)} = \cup_{j=1}^m \bar{U}_j$ where $U_j$ has a H\"older-$C^1$ corner of opening $\pi \alpha_j \in (0,2\pi]$ at $0$.

\item $\overline{B_{\rho_0}(0)} \setminus \Omega = \cup_{j=1}^m \bar{V}_j$ where $\{V_j\}_1^m$ are the $m$ connected components of $B_{\rho_0}(0) \setminus \bar{\Omega}$.

\item The sets $U_j$ and $V_j$ are ordered so that $C_{j,+} := \bar{U}_j \cap \bar{V}_{j-1}$ ($V_{0} \equiv V_m$) and $C_{j,-} := \bar{U}_j \cap \bar{V}_{j}$ are Jordan arcs of class $C^{1,\gamma_j}$, $0 < \gamma_j \leq 1$, with $C_{j,+} \cap C_{j,-} = \{0\}$, and such that if $C_{j,\pm}$ are oriented outwards, then $U_j$ lies to the left of $C_{j,+}$ and to the right of $C_{j,-}$ for $j = 1, \dots, m$.

\item For each $j = 1, \dots, m$, $J_{j,r} := U_j \cap \partial B_r(0)$ is an arc of length $r\Theta_j(r)$ where
\begin{align}\label{Thetajestimate}
\Theta_j(r) \leq \pi \alpha_j(1 + C_j r^{\gamma_j}), \qquad 0 < r \leq \rho_0,
\end{align}
for some constant $C_j > 0$. 
\end{enumerate}
%\todo[inline]{If we want to lighten the notation, we can replace $\gamma_{j}$ by $\gamma$ and $C_{j}$ by $C_{1}$, where $\gamma:= \min \gamma_{j}$ and $C_{1}:=\max C_{j}$. This does not alter the final results.}
We henceforth assume that $\rho_0 >0$ has been chosen so small that the above properties hold.

\section{Estimates of harmonic measure}\label{estimatesec}
In this section, we derive upper bounds on $\omega(z, \partial U_j \cap B_{r}(0), \Omega)$ that will be used in the proofs of Theorem \ref{mainth} and Theorem \ref{mainth2}.

Let $\Omega$ be an open connected subset of $\C^*$. A {\it metric} $\rho$ on $\Omega$ is a non-negative Borel measurable function $\rho$ on $\Omega$ such that the $\rho$-area of $\Omega$,
$$A(\Omega, \rho) = \int_\Omega \rho^2 d^2z$$
satisfies $0 < A(\Omega, \rho) < \infty$. 
Let $E$ and $F$ be subsets of $\bar{\Omega}$, and let $\Gamma$ be the family of all connected arcs in $\Omega$ joining $E$ and $F$. The {\it extremal distance} $d_\Omega(E,F)$ from $E$ to $F$ is defined by
\begin{align}\label{extremaldistancedef}
d_\Omega(E,F) = \sup_\rho \frac{L(\Gamma, \rho)^2}{A(\Omega, \rho)}, \qquad \text{where $L(\Gamma, \rho) = \inf_{\gamma \in \Gamma} \int_\gamma \rho |dz|$.}
\end{align}

We will need the following result which is Theorem H.7 in \cite{GM2005}.

\begin{lemma}{\upshape \cite[Theorem H.7]{GM2005}}\label{lemmaH7}
Let $\tilde{\Omega}$ be a finitely connected Jordan domain, and let $E$ be a finite union of arcs contained in one component $\tilde{\Gamma}$ of $\partial \tilde{\Omega}$. Suppose $\sigma$ is a Jordan arc in $\C$ connecting $z_1 \in \tilde{\Omega}$ to $\tilde{\Gamma} \setminus E$. Then
$$\omega(z_1, E, \tilde{\Omega}) \leq \frac{8}{\pi} e^{-\pi d_{\tilde{\Omega}\setminus \sigma}(\sigma, E)}.$$
\end{lemma}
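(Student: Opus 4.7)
The inequality is a classical Beurling--Ahlfors estimate, and the strategy is to combine conformal invariance of both harmonic measure and extremal distance with an explicit computation in a model rectangle.

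I would begin by reducing to the simply connected setting. Cutting $\tilde{\Omega}$ along $\sigma$ produces a domain $\tilde{\Omega}^{*} := \tilde{\Omega}\setminus \sigma$ in which $\sigma$ appears as a doubled boundary arc disjoint from $E$, so that $(\tilde{\Omega}^{*};\sigma,E)$ is a genuine quadrilateral with $L := d_{\tilde{\Omega}^{*}}(\sigma,E)$. By monotonicity of harmonic measure (under domain enlargement) and of extremal distance (under domain restriction), filling in the other holes of $\tilde{\Omega}$ only increases $\omega(z_{1},E,\tilde{\Omega})$ and decreases $L$, so I may assume $\tilde{\Omega}$ is a Jordan domain and $\tilde{\Omega}^{*}$ is simply connected; the case when $E$ is a finite union of arcs reduces by subadditivity to the case of a single arc. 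Conformal invariance then lets me map $\tilde{\Omega}^{*}$ onto a rectangle $R := (0,L)\times(0,1)$, sending the doubled $\sigma$ to $\{0\}\times[0,1]$, $E$ to $\{L\}\times[0,1]$, and the remaining parts of $\partial\tilde{\Omega}\setminus E$ to the two horizontal sides; the aspect ratio is forced to equal $L$ by conformal invariance of extremal distance, and the tip $z_{1}$ lands on the left short side.

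In the rectangle $R$, separation of variables produces the explicit Fourier series
\begin{align*}
\omega\bigl((x,y), \{L\}\times(0,1), R\bigr) = \sum_{n \text{ odd}} \frac{4}{n\pi}\,\frac{\sinh(n\pi x)}{\sinh(n\pi L)}\,\sin(n\pi y),
\end{align*}
and using $1/\sinh(n\pi L)\leq 2 e^{-n\pi L}$ one gets a rectangle-level estimate of order $e^{-\pi L}$. The constant $\frac{8}{\pi}$ arises by combining the leading ($n=1$) coefficient $\frac{4}{\pi}$ with the angular factor from the harmonic measure at the tip of the slit (where the tip subtends total angle $2\pi$ in $\tilde{\Omega}$, distributed as $\pi$ on each side of $\sigma$). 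Pulling the rectangle estimate back via the conformal map yields the desired bound.

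The main obstacle is relating $\omega(z_{1},E,\tilde{\Omega})$ to the rectangle computation, since $z_{1}$ is an interior point of $\tilde{\Omega}$ but a boundary point (the tip of the slit) of $\tilde{\Omega}^{*}$. The cleanest resolution is to use Green's identity applied to $\omega(\cdot,E,\tilde{\Omega}^{*})$ together with the Green's function $G_{\tilde{\Omega}}(\cdot,z_{1})$ to express $\omega(z_{1},E,\tilde{\Omega})$ as a flux integral of $\omega(\cdot,E,\tilde{\Omega}^{*})$ across $\sigma$, and then to bound that flux by the rectangle estimate above; after pulling back through the conformal map, this produces exactly the claimed constant $\frac{8}{\pi}$ in front of $e^{-\pi L}$.
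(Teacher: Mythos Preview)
The paper does not give its own proof of this lemma; it is quoted verbatim as Theorem~H.7 of Garnett--Marshall \cite{GM2005} and used as a black box. So there is no in-paper argument to compare your sketch against.

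That said, your reduction to the simply connected case does not work as written. You correctly observe that filling in the extra holes of $\tilde\Omega$ increases $\omega(z_1,E,\tilde\Omega)$ and decreases $L=d_{\tilde\Omega\setminus\sigma}(\sigma,E)$. But both monotonicities point the wrong way for the inequality you want: if $\tilde\Omega'\supset\tilde\Omega$ is the filled-in Jordan domain with its own extremal distance $L'\le L$, then proving the simply connected case $\omega(z_1,E,\tilde\Omega')\le\frac{8}{\pi}e^{-\pi L'}$ only yields
\[
\omega(z_1,E,\tilde\Omega)\;\le\;\omega(z_1,E,\tilde\Omega')\;\le\;\tfrac{8}{\pi}e^{-\pi L'},
\]
and since $e^{-\pi L'}\ge e^{-\pi L}$ this is strictly weaker than the desired bound $\omega(z_1,E,\tilde\Omega)\le\frac{8}{\pi}e^{-\pi L}$. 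The same objection applies to your subadditivity reduction for $E=E_1\cup\cdots\cup E_k$: summing the single-arc estimates gives $\sum_j\frac{8}{\pi}e^{-\pi d(\sigma,E_j)}$, and because each $d(\sigma,E_j)\ge d(\sigma,E)$, this sum can exceed $\frac{8}{\pi}e^{-\pi d(\sigma,E)}$ by a factor of order $k$, so the constant is not preserved.

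The rectangle/Fourier computation and the square-root behavior at the slit tip that you describe are indeed the heart of the matter in the simply connected, single-arc model case, but the passage to a finitely connected $\tilde\Omega$ and a multi-arc $E$ requires an argument that works directly in that generality (as Garnett--Marshall's does), not a monotonicity reduction. Since the paper relies on this lemma precisely in a setting where $\tilde\Omega$ is genuinely multiply connected (Lemma~\ref{extremaldistancelemma}), this gap would matter for the application.
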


The next lemma will be used to obtain upper bounds on $\omega(z, \partial U_j \cap B_{r}(0), \Omega)$. The lemma treats the case when $\Omega$ has any finite number of corners at $z_0 = 0$; the case of a single corner is included as a special case. 
The proof is basically a combination of the proofs of \cite[Theorem IV.6.2]{GM2005} and \cite[Theorem H.8]{GM2005}.
%For simply connected domains, a similar type of estimate appears in \cite[Theorem IV.6.1]{GM2005} for the case of cartesian coordinates and in \cite[Theorem IV.6.2]{GM2005} for polar coordinates. 
\cite[Theorem H.8]{GM2005} treats the cartesian case whereas we are interested in the polar case. 
In \cite[Theorem IV.6.2]{GM2005}, the estimate is for $\omega(z_1, E, \Omega)$ with $|z_1|$ small and $E$ outside a large disk. We need the opposite situation: $E$ inside a small disk and $|z_1|$ large. We therefore provide a proof.
We assume that $\rho_0$ is chosen as in Section \ref{multiplestructuresubsec}.

\begin{lemma}\label{extremaldistancelemma}
Suppose $\Omega$ is an open subset of $\C^*$ fulfilling $(i)$ and $(ii)$ of Theorem \ref{mainth2} with $z_0 = 0$ and some integer $m \geq 1$. Fix $j \in \{1, \dots, m\}$ and let $0 < r_0 < R_0 \leq \rho_0$. Let $z_1 \in \Omega$ be such that $|z_1| \geq R_0$. If $r \Theta_j(r)$ is the length of $U_j \cap \partial B_r(0)$, then
\begin{align}\label{omegaleq8pi}
\omega(z_1, \partial U_j \cap B_{r_0}(0), \Omega) \leq \frac{8}{\pi} e^{-\pi \int_{r_0}^{R_0} \frac{dr}{r \Theta_j(r)}}.
\end{align}
\end{lemma}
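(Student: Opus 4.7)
I will follow the structure suggested by the authors: combine the extremal–length argument used in the proof of \cite[Theorem IV.6.2]{GM2005} (domain reduction together with Lemma \ref{lemmaH7}) with the polar analogue of the metric computation in the proof of \cite[Theorem H.8]{GM2005}. Concretely, the proof will proceed in three steps.

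\emph{Step 1 (reduction to a finitely connected Jordan domain).} I would first replace $\Omega$ by its connected component $\Omega_{0}$ containing $z_{1}$, since $\omega(z_{1},\cdot,\Omega)=\omega(z_{1},\cdot,\Omega_{0})$. If $\bar{U}_{j}\not\subset\bar{\Omega}_{0}$ there is nothing to prove, so I may assume $U_{j}\subset\Omega_{0}$. When $m=1$, $\Omega_{0}$ is already a finitely connected Jordan domain with $E:=\partial U_{j}\cap B_{r_{0}}(0)$ lying on a single component of $\partial\Omega_{0}$, and we set $\tilde{\Omega}:=\Omega_{0}$. When $m\geq 2$, the boundary of $\Omega_{0}$ is pinched at $0$, and I would construct $\tilde{\Omega}\supseteq\Omega_{0}$ by attaching, along each $V_{k}$ with $k\neq j$, a thin open neighborhood that both absorbs the ``extra'' arcs $C_{k,\pm}$ near $0$ and still leaves $C_{j,\pm}$ on the boundary. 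The resulting $\tilde{\Omega}$ is a finitely connected Jordan domain in which $E$ is a finite union of arcs contained in one component $\tilde{\Gamma}$ of $\partial\tilde{\Omega}$. Because $\tilde{\Omega}\supseteq\Omega$ and the newly added pieces meet $\partial\Omega$ only at points of $\partial\Omega\setminus E$, the domain monotonicity of harmonic measure (a Brownian motion from $z_{1}$ that exits $\Omega$ at $\partial\Omega\setminus E$ may still reach $E$ inside $\tilde{\Omega}$) yields
\[
\omega(z_{1},E,\Omega)\leq \omega(z_{1},E,\tilde{\Omega}).
\]

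\emph{Step 2 (apply Lemma \ref{lemmaH7}).} Choose a Jordan arc $\sigma\subset\bar{\tilde{\Omega}}$ joining $z_{1}$ to a point of $\tilde{\Gamma}\setminus E$ in such a way that $\sigma\subset\{|z|\geq R_{0}\}$; this is possible because $|z_{1}|\geq R_{0}$ and the outer portion of $\tilde{\Gamma}$ can be reached from $z_{1}$ without entering $B_{R_{0}}(0)$ (at worst after a small additional modification of $\tilde{\Omega}$ away from $B_{R_{0}}(0)$). Lemma \ref{lemmaH7} then gives
\[
\omega(z_{1},E,\tilde{\Omega})\leq \frac{8}{\pi}e^{-\pi d_{\tilde{\Omega}\setminus\sigma}(\sigma,E)}.
\]

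\emph{Step 3 (extremal distance via the polar log metric).} On $A_{j}:=U_{j}\cap\{r_{0}<|z|<R_{0}\}$ I use the admissible metric $\rho(z):=(|z|\,\Theta_{j}(|z|))^{-1}\mathbf{1}_{A_{j}}(z)$. A direct computation in polar coordinates, using that $A_{j}\cap\{|z|=r\}$ has angular length $\Theta_{j}(r)$ for $r\in(r_{0},R_{0})$, gives
\[
A(\tilde{\Omega}\setminus\sigma,\rho)=\int_{A_{j}}\rho^{2}d^{2}z=\int_{r_{0}}^{R_{0}}\frac{dr}{r\,\Theta_{j}(r)}.
\]
For the $\rho$-length, any connected arc $\gamma\subset\tilde{\Omega}\setminus\sigma$ from $\sigma\subset\{|z|\geq R_{0}\}$ to $E\subset\partial U_{j}\cap B_{r_{0}}(0)$ has to sweep its radial coordinate from a value $\geq R_{0}$ down to a value $\leq r_{0}$; moreover, since $E\subset\partial U_{j}$ and by the topology of $\tilde{\Omega}$ the only route from $\sigma$ to $E$ enters $U_{j}$ from outside and must stay in $U_{j}$ as it approaches $E$, the portion of $\gamma$ in $\{r_{0}<|z|<R_{0}\}$ where $\rho$ is nonzero lies in $A_{j}$. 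Using $|dz|\geq |d|z||$ this yields
\[
\int_{\gamma}\rho\,|dz|\geq \int_{r_{0}}^{R_{0}}\frac{dr}{r\,\Theta_{j}(r)},
\]
so $L(\Gamma,\rho)^{2}/A(\tilde{\Omega}\setminus\sigma,\rho)\geq \int_{r_{0}}^{R_{0}}\frac{dr}{r\Theta_{j}(r)}$, hence $d_{\tilde{\Omega}\setminus\sigma}(\sigma,E)\geq \int_{r_{0}}^{R_{0}}\frac{dr}{r\Theta_{j}(r)}$. Substituting into the bound of Step 2 gives \eqref{omegaleq8pi}.

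\emph{Main obstacle.} The delicate part is Step 1 in the multi-corner case: constructing $\tilde{\Omega}$ as a bona fide finitely connected Jordan domain (no self-intersecting boundary components, $E$ on one component of $\partial\tilde{\Omega}$) while preserving $\tilde{\Omega}\supseteq\Omega_{0}$ and the inclusion $E\subset\partial\tilde{\Omega}\setminus\partial\Omega$ on the added side. The related topological input in Step 3 — that any $\gamma$ joining $\sigma$ to $E$ inside $\tilde{\Omega}\setminus\sigma$ must traverse $A_{j}$ through the wedge $U_{j}$ — is what forces $E$ to remain attached only to $C_{j,\pm}$ after the modification; getting this right is the key step that makes the extremal distance estimate produce the polar integral with the correct angular density $\Theta_{j}(r)$.
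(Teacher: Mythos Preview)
Your overall three–step strategy (domain reduction, Lemma~\ref{lemmaH7}, polar extremal-length estimate) matches the paper's, and Steps~2--3 are essentially the same computation. The substantive difference is in Step~1, precisely where you flag the ``main obstacle.''

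You try to keep $E=\partial U_j\cap B_{r_0}(0)$ as the target set and then manufacture a finitely connected Jordan domain $\tilde\Omega\supseteq\Omega_0$ by attaching thin neighborhoods along the $V_k$. This is where the argument is incomplete: the description ``along each $V_k$ with $k\neq j$'' would absorb $C_{j,+}=\bar U_j\cap\bar V_{j-1}$, and even with the correct index set $k\neq j-1,j$ you still have two unfilled wedges $V_{j-1},V_j$ meeting at $0$, so the boundary near $0$ is a bowtie rather than a Jordan curve. The paper sidesteps all of this with a different choice of $E$: it sets $E:=U_j\cap\partial B_{r_0}(0)$ (the separating crosscut, not the corner arcs), fills in the non-adjacent gaps by taking $\Omega':=\Omega\cup\big(B_{\rho_0}(0)\cap\bigcup_{i\neq j-1,j}\bar V_i\big)\setminus\{0\}$, and then defines $\tilde\Omega$ as the connected component of $\Omega'\setminus E$ containing $z_1$. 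Cutting along $E$ removes the $U_j$-corner at $0$, so $\tilde\Omega$ has at most one corner there and is automatically a finitely connected Jordan domain with $E$ on a single boundary component. The maximum principle then gives $\omega(z_1,\partial U_j\cap B_{r_0}(0),\Omega)\leq\omega(z_1,E,\tilde\Omega)$, after which Lemma~\ref{lemmaH7} applies directly.

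This choice also cleans up your Step~3: with $E$ the crosscut, the separation statement ``$J_{j,r}$ separates $\sigma$ from $E$ in $\tilde\Omega$'' is immediate for every $r\in(r_0,R_0)$, so the length lower bound follows without the hand-wavy topological claim that ``the only route\ldots must stay in $U_j$ as it approaches $E$.''
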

\begin{proof}
If $m \leq 2$, let $\Omega' := \Omega$. If $m \geq 3$, let $\Omega' := \Omega \cup \big(B_{\rho_0}(0) \cap \cup_{i \neq j-1, j} \bar{V}_i \big) \setminus \{0\}$, where the union is over all $i = 1, \dots, m$ with $i \neq j-1$ and $i \neq j$, i.e., over all $i$ for which $V_i$ is not adjacent to $U_j$. 
Let $E := U_j \cap \partial B_{r_0}(0)$ and let $\tilde{\Omega}$ be the component of $\Omega' \setminus E$ that contains $z_1$. 
The definition of $\Omega'$ implies that $\tilde{\Omega}$ has at most a single corner at $0$; in particular, $\tilde{\Omega}$ is a finitely connected Jordan domain.

Let $\tilde{\Gamma}$ be the component of $\partial \tilde{\Omega}$ containing $E$, see Figure \ref{figure:tildeEfigure}. Note that $E$ separates $\partial U_j \cap B_{r_0}(0)$ from $z_1$ in $\Omega'$. Hence, by the maximum principle,  
$$\omega(z_1, \partial U_j \cap B_{r_0}(0), \Omega) \leq \omega(z_1, \partial U_j \cap B_{r_0}(0), \Omega') \leq \omega(z_1, E, \tilde{\Omega}).$$
 
Let $\sigma \subset \{z : |z| = |z_1|\}$ be a curve (not necessarily contained in $\Omega$) connecting $z_1$ to $\tilde{\Gamma}$. 
By Lemma \ref{lemmaH7},
\begin{align*}
\omega(z_1, E, \tilde{\Omega}) \leq \frac{8}{\pi} e^{-\pi d_{\tilde{\Omega}\setminus \sigma}(\sigma, E)}.
\end{align*}
Therefore it only remains to show that
\begin{align}\label{dgeqint}
d_{\tilde{\Omega}\setminus \sigma}(\sigma, E) \geq \int_{r_0}^{R_0} \frac{dr}{r\Theta(r)}.
\end{align}

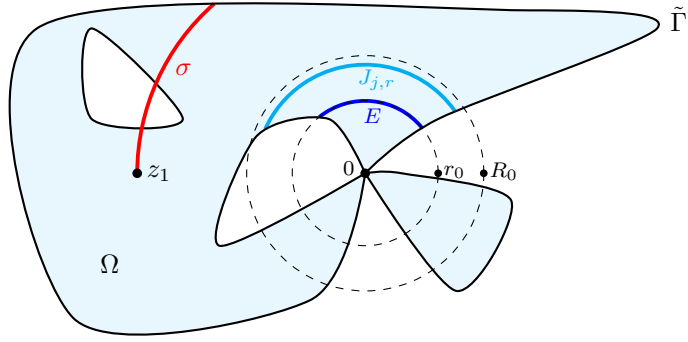
\begin{figure}
\begin{center}
\begin{tikzpicture}[scale = 2.4]
  \coordinate (origin) at (0, 0);

  \coordinate (z0) at (0,0);
  \coordinate (c2) at (0.4, 0.3);
  \coordinate (c3) at (1.6, 0.8);
  \coordinate (c4) at (0.9, 0.9);
  \coordinate (c5) at (-1.8, 0.8);
  \coordinate (c52) at (-1.6, -0.8);
  \coordinate (c53) at (-0.3, -0.7);
  \coordinate (c61) at (-0.8, -0.4);
  \coordinate (c62) at (-0.6, 0.2);
  \coordinate (c6) at (-0.2, 0.3);
  \coordinate (c7) at (0.4, -0.45);

%Draw \Omega
 % \draw[black, thick, fill=cyan!8] plot [smooth] coordinates {(z0) (c2) (c3) (c4) (c5) (c52) (c53) (z0) (c61) (c6) (z0)};
 \draw[white, thick, fill=cyan!8] plot [smooth] coordinates {(z0) (c2) (c3) (c4) (c5) (c52) (c53) (z0)};
 \draw[white, thick, fill=white!8] plot [smooth] coordinates {(z0) (c61) (c62) (c6) (z0)};
 
\coordinate (d0) at (-1, 0.3);
\coordinate (d1) at (-1.5, 0.3);
\coordinate (d2) at (-1.5, 0.8);

\coordinate (e1) at (0.5, -0.65);
\coordinate (e2) at (0.8, -0.15);
\coordinate (e3) at (0.2, 0);
\draw[black, thick, fill=cyan!8] plot [smooth] coordinates {(z0) (e1) (e2) (e3) (z0)};

 \draw[black, thick, fill=white!8] plot [smooth cycle] coordinates {(d0) (d1) (d2)};

%Draw z_0
\fill (origin) circle (0.75pt);
\node at (-0.09,0.03){\footnotesize $0$};

\node at (-1.4,-0.5){$\Omega$};

\draw[line width=0.5mm, blue] (z0)+(39:0.4) arc (39:129:0.4);
\node at (83:0.32){\footnotesize \textcolor{blue}{$E$}};

\draw[line width=0.5mm, cyan] (z0)+(35:0.6) arc (35:156:0.6);
\node at (83:0.52){\footnotesize \textcolor{cyan}{$J_{j,r}$}};
%  
%\draw[black] (z0)+(-103:0.15) arc (-103:-148:0.15);
%\node at (-130:0.26){$\pi \alpha_{2}$};
%
%\draw[black] (z0)+(4:0.15) arc (4:-57:0.15);
%\node at (-30:0.27){$\pi \alpha_{3}$};

%\draw[dashed, thick] (z0) circle (0.7cm);
%\fill (0.7,0) circle (0.6pt); 
%\node at (0.8,0){\footnotesize $\rho_{0}$};

\draw[dashed] (z0) circle (0.65cm);
\fill (0.65,0) circle (0.6pt); 
\node at (0.76,0.01){\footnotesize $R_{0}$};
%\draw[->-=0.99,-<-=0] (0,-1.15)--(0.6,-1.15);
%\node at (0.5,-1.05) {$R_{0}$};

\draw[dashed] (z0) circle (0.4cm);
\fill (0.4,0) circle (0.6pt); 
%\node at (0.34,0.06){\footnotesize $r_{0}$};
\node at (0.49,0.01){\footnotesize $r_{0}$};
%\draw[->-=0.99,-<-=0] (0,-1.1)--(0.3,-1.1);
%\node at (0.15,-1.04) {$r_{0}$};

%\draw (z0)--(0,-0.7);
%\node at (0.1,-0.5) {$\rho_{0}$};
%\node at (-90:0.8) {$\partial B_{\rho_{0}}(0)$};
%\node at (0,0.56) {$U_{1}$};
%\node at (-130:0.56) {$U_{2}$};
%\node at (-30:0.56) {$U_{3}$};

\draw[line width=0.5mm, red] (z0)+(180:1.25) arc (180:131.5:1.25);
\node at (150:1.15){\textcolor{red}{$\sigma$}};

\node at (1.72,0.85) {$\tilde{\Gamma}$};
\node at (180:1.13) {$z_{1}$};
\fill (180:1.25) circle (0.75pt);

% We plot again the countour (not *fill*) so that it overwrites on the other curves 
 \draw[black, thick] plot [smooth] coordinates {(z0) (c2) (c3) (c4) (c5) (c52) (c53) (z0)};
 \draw[black, thick] plot [smooth] coordinates {(z0) (c61) (c62) (c6) (z0)};
\end{tikzpicture} 
 \caption{\label{figure:tildeEfigure} Illustration of Lemma \ref{extremaldistancelemma}.}
 \end{center}
\end{figure}

Let $J_{j,r} := U_j \cap \partial B_r(0)$.
For $z \in \tilde{\Omega}\setminus \sigma$, we define the metric
\begin{align*}
\rho(z) = \begin{cases} \frac{1}{r\Theta_j(r)} & \text{for $z \in J_{j,r}$}, \\
0 & \text{for $z \in \tilde{\Omega}\setminus \cup_{r_0 < r < R_0} J_{j,r}$},
\end{cases}
\end{align*}
where $r = |z|$. 
Let $\Gamma$ be the family of all arcs in $\tilde{\Omega}\setminus \sigma$ connecting $E$ and $\sigma$. For each $r \in (r_0, R_0)$, $J_{j,r}$ separates $\sigma$ from $E$ in $\tilde{\Omega}$. Hence, if $\gamma \in \Gamma$, then
$$\int_\gamma \rho(z) |dz| \geq \int_{r_0}^{R_0} \frac{dr}{r\Theta_j(r)},$$
and so
$$L(\Gamma, \rho) \geq \int_{r_0}^{R_0} \frac{dr}{r\Theta_j(r)}.$$
Furthermore, the $\rho$-area of $\tilde{\Omega}\setminus \sigma$ is given by
$$A(\tilde{\Omega}\setminus \sigma, \rho) =   \int_{r_0}^{R_0} \int_{\{\theta:re^{i\theta}\in J_{j,r}\}} \frac{1}{r^2\Theta_j(r)^2} r d\theta dr   
= \int_{r_0}^{R_0} \frac{dr}{r\Theta_j(r)}.$$
Hence, in view of (\ref{extremaldistancedef}),
\begin{align*}
d_{\tilde{\Omega}\setminus \sigma}(\sigma, E) \geq \frac{L(\Gamma, \rho)^2}{A(\tilde{\Omega}\setminus \sigma, \rho)} \geq \int_{r_0}^{R_0} \frac{dr}{r\Theta_j(r)},
\end{align*}
which proves (\ref{dgeqint}) and thus completes the proof of the lemma.
\end{proof}

By applying Lemma \ref{extremaldistancelemma}, we can prove the next lemma which provides upper bounds on the harmonic measure $\omega(z, \partial U_j \cap B_r(0), \Omega)$ for any $0 < r < \min(\rho_{0},|z|)$.

\begin{figure}
%\vspace{-2cm}
\begin{center}
\begin{tikzpicture}[scale = 2.4]
  \coordinate (origin) at (0, 0);

  \coordinate (z0) at (0,0);
  \coordinate (c2) at (0.4, 0.3);
  \coordinate (c3) at (1.6, 0.8);
  \coordinate (c4) at (0.9, 0.9);
  \coordinate (c5) at (-1.8, 0.8);
  \coordinate (c52) at (-1.6, -0.8);
  \coordinate (c53) at (-0.3, -0.7);
  \coordinate (c61) at (-0.8, -0.4);
  \coordinate (c62) at (-0.6, 0.2);
  \coordinate (c6) at (-0.2, 0.3);
  \coordinate (c7) at (0.4, -0.45);

%Draw \Omega
 % \draw[black, thick, fill=cyan!8] plot [smooth] coordinates {(z0) (c2) (c3) (c4) (c5) (c52) (c53) (z0) (c61) (c6) (z0)};
 \draw[white, thick, fill=cyan!8] plot [smooth] coordinates {(z0) (c2) (c3) (c4) (c5) (c52) (c53) (z0)};
 \draw[white, thick, fill=white!8] plot [smooth] coordinates {(z0) (c61) (c62) (c6) (z0)};
 
\coordinate (d0) at (-1, 0.3);
\coordinate (d1) at (-1.5, 0.3);
\coordinate (d2) at (-1.5, 0.8);

\coordinate (e1) at (0.5, -0.65);
\coordinate (e2) at (0.8, -0.15);
\coordinate (e3) at (0.2, 0);
\draw[black, thick, fill=cyan!8] plot [smooth] coordinates {(z0) (e1) (e2) (e3) (z0)};

 \draw[black, thick, fill=white!8] plot [smooth cycle] coordinates {(d0) (d1) (d2)};

%Draw z_0
\fill (origin) circle (0.75pt);
\node at (-0.09,0.03){\footnotesize $0$};

\node at (-1.4,-0.5){$\Omega$};

\draw[line width=0.5mm, blue] (z0)+(39:0.4) arc (39:129:0.4);
\node at (83:0.32){\footnotesize \textcolor{blue}{$E$}};

\draw[line width=0.5mm, cyan] (z0)+(35:0.6) arc (35:156:0.6);
\node at (83:0.52){\footnotesize \textcolor{cyan}{$J_{j,r}$}};
%  
%\draw[black] (z0)+(-103:0.15) arc (-103:-148:0.15);
%\node at (-130:0.26){$\pi \alpha_{2}$};
%
%\draw[black] (z0)+(4:0.15) arc (4:-57:0.15);
%\node at (-30:0.27){$\pi \alpha_{3}$};

%\draw[dashed, thick] (z0) circle (0.7cm);
%\fill (0.7,0) circle (0.6pt); 
%\node at (0.8,0){\footnotesize $\rho_{0}$};

\draw[dashed] (z0) circle (0.7cm);
\fill (0.7,0) circle (0.6pt); 
\node at (0.81,0.01){\footnotesize $\rho_{0}$};
%\draw[->-=0.99,-<-=0] (0,-1.15)--(0.6,-1.15);
%\node at (0.5,-1.05) {$R_{0}$};

\draw[dashed] (z0) circle (0.4cm);
\fill (0.4,0) circle (0.6pt); 
\node at (0.49,0.01){\footnotesize $r_{0}$};

\node at (180:1.15) {$z$};
\fill (180:1.25) circle (0.75pt);

% We plot again the countour (not *fill*) so that it overwrites on the other curves 
 \draw[black, thick] plot [smooth] coordinates {(z0) (c2) (c3) (c4) (c5) (c52) (c53) (z0)};
 \draw[black, thick] plot [smooth] coordinates {(z0) (c61) (c62) (c6) (z0)};
\end{tikzpicture} 
 \caption{\label{harmonicfigure} Illustration of the argument leading to (\ref{omegazEOmega}) in the case of $|z| \geq \rho_0$.}
 \end{center}
\end{figure}
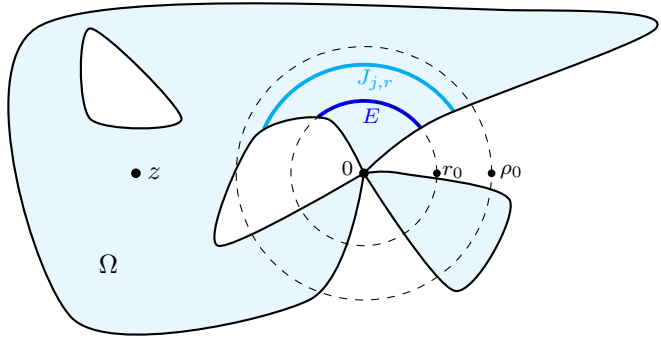

\begin{lemma}\label{omegaupperboundlemma}
Suppose $\Omega$ is an open subset of $\C^*$ fulfilling $(i)$ and $(ii)$ of Theorem \ref{mainth2} with $z_0 = 0$ and $m \geq 1$. Fix $j \in \{1, \dots, m\}$ and let $\alpha_j$, $C_j$, and $\gamma_j$ be as in (\ref{Thetajestimate}). If $r > 0$ and $z \in \Omega$ are such that $0 < r < |z| \leq \rho_0$, then
\begin{align}\label{omegaleq8pirzmultiple}
\omega(z, \partial U_j \cap B_{r}(0), \Omega) \leq \frac{8}{\pi} \bigg(\frac{r}{|z|}\bigg)^{\frac{1}{\alpha_j}} 
\left(1 +  C_j |z|^{\gamma_j }\right)^{\frac{1}{\alpha_j \gamma_j}}.
\end{align}
Moreover, if $r > 0$ and $z \in \Omega$ are such that $0 < r < \rho_0 \leq |z|$, then
\begin{align}\label{omegaleq8pirzmultiple2}
\omega(z, \partial U_j \cap B_{r}(0), \Omega) \leq \frac{8}{\pi} \bigg(\frac{r}{\rho_0}\bigg)^{\frac{1}{\alpha_j}} 
\left(1 +  C_j \rho_0^{\gamma_j }\right)^{\frac{1}{\alpha_j  \gamma_j}}.
\end{align}
\end{lemma}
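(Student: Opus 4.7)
\medskip

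\textbf{Proof proposal.} The plan is to apply Lemma \ref{extremaldistancelemma} directly in both cases, and then to estimate the resulting integral $\int \frac{ds}{s\Theta_j(s)}$ from below using the bound \eqref{Thetajestimate} on $\Theta_j$.

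For the first inequality \eqref{omegaleq8pirzmultiple}, where $0 < r < |z| \leq \rho_0$, I would apply Lemma \ref{extremaldistancelemma} with $r_0 = r$, $R_0 = |z|$, and $z_1 = z$ (noting that $|z_1| = |z| = R_0 \geq R_0$, which is allowed). This yields
\begin{equation*}
\omega(z, \partial U_j \cap B_r(0), \Omega) \leq \frac{8}{\pi} \exp\!\left(-\pi \int_r^{|z|} \frac{ds}{s \Theta_j(s)}\right).
\end{equation*}
Using $\Theta_j(s) \leq \pi \alpha_j (1 + C_j s^{\gamma_j})$ from \eqref{Thetajestimate}, the integral is bounded below by
\begin{equation*}
\int_r^{|z|} \frac{ds}{\pi \alpha_j s (1 + C_j s^{\gamma_j})}.
\end{equation*}
The substitution $u = s^{\gamma_j}$ and partial fractions $\frac{1}{u(1+C_j u)} = \frac{1}{u} - \frac{C_j}{1+C_j u}$ yield the antiderivative $\frac{1}{\gamma_j}\log\!\frac{s^{\gamma_j}}{1 + C_j s^{\gamma_j}}$, so
\begin{equation*}
\pi \int_r^{|z|} \frac{ds}{s \Theta_j(s)} \geq \frac{1}{\alpha_j \gamma_j} \log\!\left[\left(\frac{|z|}{r}\right)^{\gamma_j} \frac{1 + C_j r^{\gamma_j}}{1 + C_j |z|^{\gamma_j}}\right].
\end{equation*}
Dropping the factor $1 + C_j r^{\gamma_j} \geq 1$ (which only worsens the upper bound on the exponential) gives exactly \eqref{omegaleq8pirzmultiple}.

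For the second inequality \eqref{omegaleq8pirzmultiple2}, where $0 < r < \rho_0 \leq |z|$, the argument is nearly identical: apply Lemma \ref{extremaldistancelemma} with $r_0 = r$ and $R_0 = \rho_0$ (so that $|z_1| = |z| \geq \rho_0 = R_0$), then use the same antiderivative computation on the interval $[r, \rho_0]$. The same simplification produces the stated bound. The only real subtlety to verify is the evaluation of the elementary integral and the sign-preserving use of $1 + C_j r^{\gamma_j} \geq 1$; both are straightforward, so I do not expect a serious obstacle. No step requires new ideas beyond what is already provided by Lemma \ref{extremaldistancelemma} and the angular estimate \eqref{Thetajestimate}.
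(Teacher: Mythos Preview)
Your proposal is correct and follows essentially the same approach as the paper: apply Lemma~\ref{extremaldistancelemma} with $R_0=\min(\rho_0,|z|)$, bound $\Theta_j$ via \eqref{Thetajestimate}, evaluate the resulting elementary integral, and discard the factor $1+C_j r^{\gamma_j}\geq 1$. The only cosmetic difference is that the paper treats both cases at once via $R_0=\min(\rho_0,|z|)$ rather than splitting them, and states the antiderivative directly rather than showing the substitution $u=s^{\gamma_j}$.
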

\begin{proof}
Applying Lemma \ref{extremaldistancelemma} with $R_0 = \min(\rho_0, |z|)$, we find that if $r_0 \in [0, \rho_0)$, then (see Figure \ref{harmonicfigure})
\begin{align}\label{omegazEOmega}
\omega(z, \partial U_j \cap B_{r_0}(0), \Omega) \leq \frac{8}{\pi} e^{-\pi \int_{r_0}^{\min(\rho_0, |z|)} \frac{dr}{r \Theta_j(r)}} \quad \text{for all $z \in \Omega$ with $|z| > r_0$}.
\end{align}
Using (\ref{Thetajestimate}), we see that, whenever $z \in \Omega$ and $0 < r_0 < |z| \leq \rho_0$,
\begin{align*}
\int_{r_0}^{\min(\rho_0, |z|)} \frac{dr}{r \Theta_j(r)}
& \geq \int_{r_0}^{|z|} \frac{dr}{r \pi \alpha_j (1 + C_j r^{\gamma_j})}
= \frac{\log(\frac{|z|}{r_0})}{\pi  \alpha_j}
- \frac{\log \Big(\frac{1 +  C_j |z|^{\gamma_j }}{1 +  C_j r_0^{\gamma_j }}\Big)}{\pi  \alpha_j  \gamma_j }
	\\
& \geq \frac{\log(\frac{|z|}{r_0})}{\pi  \alpha_j}
- \frac{\log \left(1 +  C_j |z|^{\gamma_j }\right)}{\pi  \alpha_j  \gamma_j }.
\end{align*}
Employing this inequality in (\ref{omegazEOmega}) and replacing $r_0$ by $r$, we arrive at (\ref{omegaleq8pirzmultiple}). The proof of (\ref{omegaleq8pirzmultiple2}) follows similarly using that
$$\int_{r_0}^{\min(\rho_0, |z|)} \frac{dr}{r \Theta_j(r)}
\geq  \frac{\log(\frac{\rho_0}{r_0})}{\pi  \alpha_j}
- \frac{\log \left(1 +  C_j \rho_0^{\gamma_j }\right)}{\pi  \alpha_j  \gamma_j }
$$
whenever $z \in \Omega$ and $0 < r_{0} < \rho_0 \leq |z|$.
\end{proof}

\section{Proof of Theorem \ref{mainth}}\label{proofsec}
Let $\Omega$ be a finitely connected Jordan domain in $\C^*$ such that $\Omega$ has a H\"older-$C^1$ corner of opening $\pi \alpha$ at $0$.
Fix $b > 0$ and let $\mu$ be a non-negative measure on $\Omega$ of finite total mass such that $d\mu(z) = (1+o(1)) |z|^{2b-2}d^{2}z$ as $z\to 0$. Let $\nu = \mathrm{Bal}(\mu,\partial \Omega)$. Shrinking $\rho_0$ if necessary, we may assume that (\ref{muc0o1eps}) holds for all measurable subsets $A$ of $\Omega \cap \overline{B_{\rho_0}(0)}$.

Applying Lemma \ref{omegaupperboundlemma} with $m = 1$, we find the following estimates: If $r > 0$ and $z \in \Omega$ are such that $0 < r < |z| \leq \rho_0$, then
\begin{align}\label{omegaleq8pirz}
\omega(z, \partial \Omega \cap B_{r}(0), \Omega) \leq \frac{8}{\pi} \bigg(\frac{r}{|z|}\bigg)^{\frac{1}{\alpha}} 
\left(1 +  C_1 |z|^{\gamma }\right)^{\frac{1}{\alpha \gamma}},
\end{align}
while if $r > 0$ and $z \in \Omega$ are such that $0 < r < \rho_0 \leq |z|$, then
\begin{align}\label{omegaleq8pirz2}
\omega(z, \partial \Omega \cap B_{r}(0), \Omega) \leq \frac{8}{\pi} \bigg(\frac{r}{\rho_0}\bigg)^{\frac{1}{\alpha}} 
\left(1 +  C_1 \rho_0^{\gamma }\right)^{\frac{1}{\alpha \gamma}},
\end{align}
where $C_1 > 0$ and $0 < \gamma \leq 1$.

The proof of the following lemma is based on integration of the inequalities (\ref{omegaleq8pirz}) and (\ref{omegaleq8pirz2}).

\begin{lemma}\label{nuuppersinglelemma}
For every $\epsilon > 0$, we have
$$\nu(\partial \Omega \cap B_r(0)) \leq \begin{cases}
(1+\epsilon) \frac{\pi \alpha}{2b} (1  + \frac{16 b}{\pi (\frac{1}{\alpha} - 2b)})r^{2b} & \text{if $2b < \frac{1}{\alpha}$}, \\
(1+\epsilon) 8  \alpha r^{2b} \log(\frac{1}{r}) &  \text{if $2b= \frac{1}{\alpha}$}, \\
C r^{\frac{1}{\alpha}} &  \text{if $2b > \frac{1}{\alpha}$},
\end{cases}
$$
for all sufficiently small $r > 0$. 
\end{lemma}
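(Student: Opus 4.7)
The plan is to start from the representation $\nu(\partial\Omega \cap B_r(0)) = \int_\Omega \omega(z, \partial\Omega \cap B_r(0), \Omega)\, d\mu(z)$ in (2.2) and split the integration domain into three annular pieces: the near region $\Omega \cap \{|z| \leq r\}$, the intermediate region $\Omega \cap \{r < |z| \leq \rho_0\}$, and the far region $\Omega \cap \{|z| > \rho_0\}$. On each piece I would apply a different upper bound on $\omega$: the trivial bound $\omega \leq 1$ on the near piece, the bound (4.6) on the intermediate piece, and (4.7) on the far piece. In the near and intermediate pieces, the integrals against $d\mu$ would be converted, via the local density bound $\mu(A) \leq (1+\epsilon)\int_A |z|^{2b-2}d^2z$ from (2.4), into Lebesgue integrals, and then reduced to one-dimensional radial integrals via polar coordinates together with the arc-length estimate $\Theta(\rho) \leq \pi\alpha(1 + C_1\rho^\gamma)$ from Lemma \ref{nearcornerlemma}(v). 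The far piece will simply give $Cr^{1/\alpha}$ by (4.7) and $\mu(\Omega)<\infty$.

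Carrying this out, the near piece contributes at most $(1+\epsilon)\pi\alpha\bigl[\tfrac{r^{2b}}{2b} + O(r^{2b+\gamma})\bigr]$ and the intermediate piece contributes at most
\begin{equation*}
8\alpha(1+\epsilon)\, r^{1/\alpha}\int_r^{\rho_0} \rho^{2b-1/\alpha-1} (1+O(\rho^\gamma))\, d\rho,
\end{equation*}
where a factor $\pi\alpha$ from the arc length has been absorbed and the error $(1+O(\rho^\gamma))$ collects the contributions from $(1+C_1\rho^\gamma)^{1/(\alpha\gamma)+1}$. The three cases of the lemma then reduce to evaluating this radial integral. For $2b<1/\alpha$, direct integration gives $\int_r^{\rho_0}\rho^{2b-1/\alpha-1}d\rho \leq r^{2b-1/\alpha}/(1/\alpha - 2b)$ for small enough $r$, producing intermediate contribution $(1+\epsilon)\tfrac{8\alpha}{1/\alpha - 2b}r^{2b}$; combining with the near piece and using the identity $\tfrac{\pi\alpha}{2b}\bigl(1 + \tfrac{16b}{\pi(1/\alpha - 2b)}\bigr) = \tfrac{\pi\alpha}{2b} + \tfrac{8\alpha}{1/\alpha - 2b}$ produces the first stated bound, the far piece $O(r^{1/\alpha})$ being of lower order. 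For $2b = 1/\alpha$ the radial integral equals $\log(\rho_0/r) = (1+o(1))\log(1/r)$, giving dominant intermediate contribution $(1+\epsilon)\, 8\alpha\, r^{2b}\log(1/r)$, with the near and far pieces of lower order. For $2b > 1/\alpha$ the integral is bounded by the constant $\rho_0^{2b-1/\alpha}/(2b-1/\alpha)$, so all three pieces are $O(r^{1/\alpha})$.

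The main obstacle, and really the only non-routine step, is the careful bookkeeping in the first case: one must verify that all the multiplicative errors of the form $(1+O(\rho^\gamma))$ coming from (2.4), (4.6), and Lemma \ref{nearcornerlemma}(v) can be absorbed into the single prefactor $(1+\epsilon)$ by taking $r$ sufficiently small (shrinking $\rho_0$ if needed), and that the algebraic identity above recovers exactly the claimed constant. Since the exponent $2b>0$ is strictly positive, all error terms are of strictly lower order than the main term, so the absorption is straightforward. Beyond this, no additional potential-theoretic input is needed; Lemmas \ref{nearcornerlemma} and \ref{omegaupperboundlemma} already do the heavy lifting.
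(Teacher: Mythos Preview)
Your proposal is correct and follows essentially the same approach as the paper: the same three-region decomposition $\{|z|\leq r\}$, $\{r<|z|\leq\rho_0\}$, $\{|z|>\rho_0\}$, the same bounds on each piece (trivial $\omega\leq 1$, \eqref{omegaleq8pirz}, \eqref{omegaleq8pirz2} respectively), and the same reduction to radial integrals via \eqref{muc0o1eps} and \eqref{Thetaestimate}. The paper handles the $(1+O(\rho^\gamma))$ factors by bounding them uniformly by $(1+C_1\rho_0^\gamma)^{1/(\alpha\gamma)+1}\leq 1+\epsilon$ before integrating (your ``shrinking $\rho_0$ if needed''), and then combines the pieces exactly via the identity you state.
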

\begin{proof}
Recall that, by definition, $\nu(\partial \Omega \cap B_r(0)) = \int_\Omega \omega(z, \partial \Omega \cap B_r(0), \Omega) d\mu(z)$. For $r \in (0, \rho_0)$ we write $\nu(\partial \Omega \cap B_r(0))$ as the sum of three integrals:
$$\nu(\partial \Omega \cap B_r(0)) = I_{1,r} + I_{2,r} + I_{3,r},$$
where
\begin{align*}
&  I_{1,r} := \int_{\Omega \cap \overline{B_{r}(0)}} \omega(z, \partial \Omega \cap B_r(0), \Omega) d\mu(z), 
&& I_{2,r} := \int_{(\Omega \cap B_{\rho_0}(0))\setminus \overline{B_{r}(0)}} \omega(z, \partial \Omega \cap B_r(0), \Omega) d\mu(z),
	\\
&
I_{3,r} := \int_{\Omega \setminus B_{\rho_0}(0)} \omega(z, \partial \Omega \cap B_r(0), \Omega) d\mu(z).
\end{align*}

Let $\epsilon > 0$. Shrinking $\rho_0$ if necessary, we may assume that 
\begin{align}\label{1C0rho01plusepsilon}
\left(1 +  C_1 \rho_0^{\gamma }\right)^{\frac{1}{\alpha  \gamma} + 1} \leq 1 + \epsilon.
\end{align}
Using the fact that the harmonic measure of any set is $\leq 1$,  (\ref{muc0o1eps}), and (\ref{Thetaestimate}), we find
\begin{align}\nonumber
I_{1,r} & \leq \int_{\Omega \cap \overline{B_{r}(0)}}  d\mu(z)
\leq (1+\epsilon) \int_{\Omega \cap \overline{B_{r}(0)}}  |z|^{2b-2}d^{2}z
\leq (1+\epsilon) 
\pi \alpha(1 + C_1 r^{\gamma}) \int_0^r  \rho^{2b-1} d\rho
	\\ \label{I1restimate}
& \leq (1+\epsilon)^2 \frac{\pi \alpha}{2b} r^{2b}
\end{align}
for all sufficiently small $r > 0$. 
To estimate $I_{2,r}$, we use (\ref{muc0o1eps}), (\ref{omegaleq8pirz}), and (\ref{Thetaestimate}) to write, for all sufficiently small $r > 0$,
\begin{align*}\nonumber
I_{2,r} 
& \leq \int_{(\Omega \cap B_{\rho_0}(0))\setminus \overline{B_{r}(0)}} \frac{8}{\pi} \bigg(\frac{r}{|z|}\bigg)^{\frac{1}{\alpha}} 
\left(1 +  C_1 |z|^{\gamma }\right)^{\frac{1}{\alpha  \gamma}}  (1+\epsilon) |z|^{2b-2}d^{2}z
	\\\nonumber
& \leq  \int_r^{\rho_0} \frac{8}{\pi} \bigg(\frac{r}{\rho}\bigg)^{\frac{1}{\alpha}} 
\left(1 +  C_1 \rho^{\gamma }\right)^{\frac{1}{\alpha  \gamma}} (1+\epsilon) \pi \alpha(1 + C_1 \rho^{\gamma})  \rho^{2b-1}d\rho
	\\  \nonumber
& \leq 8 (1+\epsilon) \alpha
\left(1 +  C_1 \rho_0^{\gamma }\right)^{\frac{1}{\alpha  \gamma} + 1} 
r^{\frac{1}{\alpha}} \int_r^{\rho_0} \rho^{2b-1- \frac{1}{\alpha}}d\rho.
\end{align*}
In light of (\ref{1C0rho01plusepsilon}), this gives	
\begin{align} \label{I2restimate}
I_{2,r} & \leq 
\begin{cases}
(1+\epsilon)^2 8 \alpha
 r^{\frac{1}{\alpha}} \frac{\rho_0^{2b - \frac{1}{\alpha}} - r^{2b - \frac{1}{\alpha}}}{2b - \frac{1}{\alpha}} 
\leq C r^{\frac{1}{\alpha}} + (1+\epsilon)^2 \frac{8 \alpha}{|2b - \frac{1}{\alpha}|} r^{2b} & \text{if $2b \neq \frac{1}{\alpha}$}, 
	\\
(1+\epsilon)^2 8 \alpha r^{2b} \log(\frac{\rho_0}{r})
& \text{if $2b = \frac{1}{\alpha}$},
\end{cases}
\end{align}
for all sufficiently small $r > 0$.
Finally, using (\ref{omegaleq8pirz2}) and the fact that $\mu$ has finite total mass, we obtain
\begin{align}\label{I3restimate}
I_{3,r} 
\leq  \frac{8}{\pi} \bigg(\frac{r}{\rho_0}\bigg)^{\frac{1}{\alpha}} 
\left(1 +  C_1 \rho_0^{\gamma }\right)^{\frac{1}{\alpha  \gamma}} \int_{\Omega \setminus B_{\rho_0}(0)} d\mu(z)
\leq C r^{\frac{1}{\alpha}}
\end{align}
for all sufficiently small $r > 0$.
Since $\nu(\partial \Omega \cap B_r(0)) = I_{1,r} + I_{2,r} + I_{3,r}$ and $\epsilon >0$ was arbitrary, the desired conclusion follows from (\ref{I1restimate}), (\ref{I2restimate}), and (\ref{I3restimate}).
\end{proof}

Lemma \ref{nuuppersinglelemma} establishes the upper bounds on $\nu(\partial \Omega \cap B_r(0))$ stated in (\ref{nubounds}). 
In what follows, we establish the lower bounds on $\nu(\partial \Omega \cap B_r(0))$ stated in (\ref{nubounds}). 

Let $\Gamma_1$ be the component of $\partial \Omega$ containing $0$. Let $\Omega_1 \supset \Omega$ be the component of $\C^* \setminus \Gamma_1$ that contains $\Omega$. Then $\partial \Omega_1 = \Gamma_1$ is a Jordan curve and $\Omega_1$ has a H\"older-$C^1$ corner of opening  $\pi \alpha$ at $0$. By the Jordan curve theorem and the Riemann mapping theorem, there is a conformal map $f$ of the open upper half-plane $\mathbb{H}$ onto $\Omega_1$ such that $f(0) = 0$, $C_+ \subset f(\R_+)$, and $C_- \subset f(\R_-)$. Decreasing $\gamma$ if necessary, in what follows we assume that $\gamma < \frac{1}{2}$. By Lemma \ref{exerciselemma}, since $0$ is a H\"older-$C^1$ corner, $f(z) = c z^\alpha(1 + O(z^{\alpha \gamma}))$ as $z \to 0$, $z \in  \bar{\mathbb{H}}$, where $c \in \C \setminus \{0\}$ is a constant and $z^\alpha:=|z|^{\alpha}e^{i\alpha \arg z}$, $\arg z \in (-\frac{\pi}{2},\frac{3\pi}{2})$. From (\ref{argz0pialpha}), we infer that $\arg c = 0$, so that replacing $f(z)$ with $f(c^{-1/\alpha} z)$, we have
\begin{align}\label{fnear0}
f(z) = z^\alpha(1 + O(z^{\alpha \gamma})) \qquad \text{as $z \to 0$, $z \in  \bar{\mathbb{H}}$}.
\end{align}
%For the reader's convenience, we prove \cite[Exercise 3.4.1]{P1992} in the appendix.

%Let $f:\mathbb{H} \to \Omega_1$ be the conformal map that appeared in the proof of Lemma \ref{nearcornerlemma}. 
Let $a$ be so small that the image under $f$ of the half-disk $B_a(0) \cap \mathbb{H}$ of radius $a$ is contained in $\Omega \cap B_{\rho_0}(0)$, where $\rho_0$ is as in Lemma \ref{nearcornerlemma}.
Let $S_{a^{\alpha}}$ be the circular sector of angle $\pi \alpha$ and radius $a^{\alpha}$ defined by (\ref{Srhodef}).
Let $h$ be the conformal map of the half-disk $B_a(0) \cap \mathbb{H}$ onto $S_{a^\alpha}$ given by $h(z) = z^\alpha$. 
Then $f \circ h^{-1}$ is a conformal map of $S_{a^\alpha}$ onto $f(B_a(0) \cap \mathbb{H})$. 

\begin{lemma}
For every $\epsilon > 0$, there exist an $a > 0$ such that 
\begin{align}\label{nugeqc1c3}
\nu(\partial \Omega \cap B_r(0)) 
\geq (1-\epsilon) \int_{S_{a^\alpha}} \omega((f \circ h^{-1})(w), \partial \Omega \cap B_r(0), f(B_a(0) \cap \mathbb{H})) |w|^{2b-2} d^{2}w
\end{align}
for all sufficiently small $ r > 0$.
\end{lemma}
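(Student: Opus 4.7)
The plan is to restrict the integral defining $\nu$ to the smaller set $\Omega_a := f(B_a(0) \cap \mathbb{H}) \subset \Omega$, replace the harmonic measure in $\Omega$ by the one in $\Omega_a$ via monotonicity, replace $d\mu(z)$ by $|z|^{2b-2}d^{2}z$ using \eqref{muc0o1eps}, and finally push the integral forward to $S_{a^\alpha}$ via the conformal change of variables $z = g(w)$, $g := f \circ h^{-1}$.

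Fix $\epsilon > 0$. For the restriction and monotonicity step, I would start from $\nu(\partial \Omega \cap B_r(0)) = \int_\Omega \omega(z, \partial \Omega \cap B_r(0), \Omega) d\mu(z)$ and, since $\omega \geq 0$, obtain
\[
\nu(\partial \Omega \cap B_r(0)) \geq \int_{\Omega_a} \omega(z, \partial \Omega \cap B_r(0), \Omega) \, d\mu(z).
\]
Choose $a > 0$ so small that $\Omega_a \subset \Omega \cap B_{\rho_0}(0)$. The portion of $\partial \Omega_a$ lying on $\partial \Omega$ is $f([-a, a]) \subset C_+ \cup C_-$, and since $w_\pm(r) = f(x)$ with $x = O(r^{1/\alpha})$ (as in the proof of Lemma \ref{nearcornerlemma}), for all sufficiently small $r$ we have $\partial \Omega \cap B_r(0) \subset f([-a,a]) \subset \partial \Omega_a$. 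Standard monotonicity of harmonic measure under shrinking of the domain (a Brownian path from $z \in \Omega_a$ that first exits $\Omega_a$ in $E \subset \partial \Omega \cap \partial \Omega_a$ also first exits $\Omega$ in $E$) then yields
\[
\omega(z, \partial \Omega \cap B_r(0), \Omega) \geq \omega(z, \partial \Omega \cap B_r(0), \Omega_a), \qquad z \in \Omega_a.
\]
Next, using \eqref{muc0o1eps} on $\Omega_a$ (shrinking $a$ if needed), $d\mu(z) \geq (1 - \tfrac{\epsilon}{3})|z|^{2b-2} d^{2}z$, so
\[
\nu(\partial \Omega \cap B_r(0)) \geq (1 - \tfrac{\epsilon}{3}) \int_{\Omega_a} \omega(z, \partial \Omega \cap B_r(0), \Omega_a) \,|z|^{2b-2} d^{2}z.
\]

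For the change of variables, $g := f \circ h^{-1}$ is a conformal bijection of $S_{a^\alpha}$ onto $\Omega_a$. The expansion \eqref{fnear0} gives $g(w) = f(w^{1/\alpha}) = w\bigl(1 + O(|w|^\gamma)\bigr)$ as $w \to 0$. A Cauchy-integral estimate applied to \eqref{fnear0} on disks of radius comparable to $|\zeta|$ inside $\mathbb{H}$ (using Hölder-$C^1$ regularity up to the real segments of the boundary) upgrades it to $f'(\zeta) = \alpha \zeta^{\alpha - 1}\bigl(1 + O(|\zeta|^{\alpha \gamma})\bigr)$, from which one computes $g'(w) = 1 + O(|w|^\gamma)$. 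Therefore $d^{2}z = |g'(w)|^2 d^{2}w$ and
\[
|z|^{2b-2} d^{2}z = |w|^{2b-2}\bigl(1 + O(|w|^\gamma)\bigr) d^{2}w,
\]
and for $a$ small the extra factor exceeds $1 - \tfrac{\epsilon}{3}$ uniformly on $S_{a^\alpha}$. Combining the three steps produces \eqref{nugeqc1c3}, after absorbing $(1-\tfrac{\epsilon}{3})^2$ into $1-\epsilon$.

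The main obstacle is upgrading the $C^0$-asymptotic \eqref{fnear0} to the derivative expansion needed to control the Jacobian $|g'(w)|^2$. Away from the boundary this is routine via Cauchy's formula; the care needed near $0$ is supplied by the Hölder-$C^1$ regularity of $C_\pm$. Once $g'(w) = 1 + O(|w|^\gamma)$ is in hand, the chain of inequalities above is essentially bookkeeping.
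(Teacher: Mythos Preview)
Your approach is essentially the same as the paper's: restrict to $\Omega_a = f(B_a(0)\cap\mathbb{H})$, use \eqref{muc0o1eps} to replace $d\mu$ by $(1-\epsilon)|z|^{2b-2}d^2z$, apply the maximum principle to pass from $\omega(\cdot,\cdot,\Omega)$ to $\omega(\cdot,\cdot,\Omega_a)$, and then change variables via $g=f\circ h^{-1}$.

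The one point that is not fully justified in your proposal is the Jacobian estimate. The Cauchy-integral argument you sketch, applied on disks of radius comparable to $|\zeta|$, only works for $\zeta$ at distance $\gtrsim |\zeta|$ from $\partial\mathbb{H}$; it gives no control when $\zeta$ is close to the real axis, and after the change of variables this corresponds to $w$ near the straight edges of $S_{a^\alpha}$, which has full measure contribution to the integral. Your remark that ``the care needed near $0$ is supplied by the H\"older-$C^1$ regularity of $C_\pm$'' is true in spirit but is not an argument: what is actually required is a boundary-regularity theorem for conformal maps onto domains with $C^{1,\gamma}$ corners. The paper does not attempt to derive this and instead invokes \cite[Theorem~3.9]{P1992} directly, which gives
\[
f'(w^{1/\alpha}) = \alpha w^{1-1/\alpha}(1+o(1)) \qquad \text{as } w\to 0,\ w\in \bar{S}_{a^\alpha},
\]
uniformly up to the boundary. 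This yields only $g'(w)=1+o(1)$ rather than your claimed $1+O(|w|^\gamma)$, but the weaker statement is all that is needed to absorb the Jacobian into the factor $(1-\epsilon)$ after shrinking $a$. If you want to keep your write-up self-contained, you should either cite Pommerenke (or an equivalent Kellogg--Warschawski type result) at this step, or spell out a reflection/localization argument that genuinely handles the boundary; the bare Cauchy estimate does not.
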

\begin{proof}
Since $f(B_a(0) \cap \mathbb{H}) \subset \Omega$, we have
$$\nu(\partial \Omega \cap B_r(0)) = \int_\Omega \omega(z, \partial \Omega \cap B_r(0), \Omega) d\mu(z)
\geq 
\int_{f(B_a(0) \cap \mathbb{H})} \omega(z, \partial \Omega \cap B_r(0), \Omega) d\mu(z).$$ 
Utilizing that $f(B_a(0) \cap \mathbb{H})$ is contained in $\Omega \cap B_{\rho_0}(0)$ and (\ref{muc0o1eps}), we obtain
\begin{align}\label{nupartialOmegacapBr}
\nu(\partial \Omega \cap B_r(0)) 
\geq (1-\epsilon) \int_{f(B_a(0) \cap \mathbb{H})} \omega(z, \partial \Omega \cap B_r(0), \Omega) |z|^{2b-2}d^{2}z
\end{align}
for all sufficiently small $ r > 0$.
Whenever $r$ is so small that $\partial \Omega \cap B_r(0)$ is a subset of $\partial f(B_a(0) \cap \mathbb{H})$, the maximum principle yields
$$\omega(z, \partial \Omega \cap B_r(0), \Omega) \geq \omega(z, \partial \Omega \cap B_r(0), f(B_a(0) \cap \mathbb{H}))$$
and thus (\ref{nupartialOmegacapBr}) implies
\begin{align}\label{nupartialOmegaBr}
\nu(\partial \Omega \cap B_r(0)) 
\geq (1-\epsilon) \int_{f(B_a(0) \cap \mathbb{H})} \omega(z, \partial \Omega \cap B_r(0), f(B_a(0) \cap \mathbb{H})) |z|^{2b-2}d^{2}z.
\end{align}
The final step is to perform the change of variables $z = (f \circ h^{-1})(w) = f(w^{\frac{1}{\alpha}})$ in the integral in (\ref{nupartialOmegaBr}). It follows from (\ref{fnear0}) that
\begin{align}\label{zexpansionw}
z = w (1 + O(w^{\gamma})) \qquad \text{as $w \to 0$, $w \in \bar{S}_{a^\alpha}$}.
\end{align}
Moreover, by \cite[Theorem 3.9]{P1992},
$$f'(w^{\frac{1}{\alpha}}) = \alpha w^{1-\frac{1}{\alpha}} (1 + o(1))  \qquad \text{as $w \to 0$, $w \in \bar{S}_{a^\alpha}$}.$$
Therefore, shrinking $a$ if necessary, we have
$$|z|^{2b-2}d^{2}z = |z|^{2b-2} |f'(w^{\frac{1}{\alpha}})|^2 \bigg|\frac{w^{\frac{1}{\alpha} - 1}}{\alpha}\bigg|^2 d^{2}w
\geq (1-\epsilon) |w|^{2b-2} d^{2}w$$
for all $z \in f(B_a(0) \cap \mathbb{H})$.
Hence, changing variables from $z$ to $w$ in (\ref{nupartialOmegaBr}) and recalling that $\epsilon > 0$ was arbitrary, we conclude that (\ref{nugeqc1c3}) holds.
\end{proof}

Let $\mu_{b}$ be the restriction of the measure $|w|^{2b-2} d^{2}w$ to $S_{a^\alpha}$. Let $\nu_{b} := \mathrm{Bal}(\mu_{b}, \partial S_{a^\alpha})$ be the balayage of $\mu_{b}$ onto $\partial S_{a^\alpha}$ so that
\begin{align*}
\nu_{b}(E) = \int_{S_{a^\alpha}} \omega(w, E, S_{a^\alpha}) |w|^{2b-2} d^{2}w \qquad \text{for Borel subsets $E$ of $\partial S_{a^\alpha}$}.
\end{align*}
By the conformal invariance of harmonic measure, we have
$$\omega((f \circ h^{-1})(w), \partial \Omega \cap B_r(0), f(B_a(0) \cap \mathbb{H})) = \omega(w, (h \circ f^{-1})(\partial \Omega \cap B_r(0)), S_{a^\alpha}).$$
Moreover, by (\ref{zexpansionw}), the set $(h \circ f^{-1})(\partial \Omega \cap B_r(0))$ contains $\partial S_{a^\alpha} \cap B_{(1-\epsilon) r}(0)$ for all small enough $r > 0$. Consequently, we deduce from (\ref{nugeqc1c3}) that, for all sufficiently small $r > 0$,
\begin{align}\label{nupartialOmegaBr0}
\nu(\partial \Omega \cap B_r(0)) 
\geq (1-\epsilon) \nu_{b}\big((h \circ f^{-1})(\partial \Omega \cap B_r(0))\big)
\geq (1-\epsilon) \nu_{b}\big(\partial S_{a^\alpha} \cap B_{(1-\epsilon) r}(0)\big).
\end{align}

On the other hand, by \cite[Remark 2.18]{C2023}, we have, for $R \in (0, a^{\alpha})$, %\alpha = 2/p, p = 2/\alpha
\begin{align*}
& \nu_{b}\big(\partial S_{a^\alpha} \cap B_{R}(0)\big) =
	\\
&\hspace{1.4cm} \begin{cases}
2\int_0^R \frac{4 a^{\alpha 2b}}{\alpha \pi r} \sum_{j=0}^\infty \frac{(\frac{r}{a^\alpha})^{2b} - (\frac{r}{a^\alpha})^{\frac{2j}{\alpha} + \frac{1}{\alpha}}}{(\frac{2j}{\alpha} + \frac{1}{\alpha})^2 - (2b)^2}  dr
& \text{if $2b \notin \frac{1}{\alpha}+\frac{2}{\alpha}\N_{\geq 0}$}, \\
2\int_0^R \frac{1}{\pi} \Big(2 r^{2b -1} \log(\frac{a^\alpha}{r}) + \sum_{j=1}^\infty \alpha r^{2b - 1}\frac{1- (\frac{r}{a^\alpha})^{\frac{2j}{\alpha}}}{j(j+1)} \Big) dr
&  \text{if $2b= \frac{1}{\alpha}$}, \\
 2\int_0^R \frac{1}{\pi} \Big(\frac{2 r^{2b -1}}{1+2k} \log(\frac{a^\alpha}{r}) + \frac{4 a^{\alpha 2b}}{\alpha r}\sum_{\substack{j=0 \\ j \neq k}}^\infty \frac{(\frac{r}{a^\alpha})^{2b} - (\frac{r}{a^\alpha})^{\frac{2j}{\alpha} + \frac{1}{\alpha}}}{(\frac{2j}{\alpha} + \frac{1}{\alpha})^2 - (2b)^2} \Big) dr
&  \text{if $2b= \frac{1+2k}{\alpha}$, $k \in \N_{\geq 1}$}.
\end{cases}
\end{align*}
If $2b<\frac{1}{\alpha}$, then, as $R\to 0$,
\begin{align*}
& 2\int_0^R \frac{4 a^{\alpha 2b}}{\alpha \pi r} \sum_{j=0}^\infty \frac{(\frac{r}{a^\alpha})^{2b} - (\frac{r}{a^\alpha})^{\frac{2j}{\alpha} + \frac{1}{\alpha}}}{(\frac{2j}{\alpha} + \frac{1}{\alpha})^2 - (2b)^2}  dr = 2\int_0^R \frac{4 a^{\alpha 2b}}{\alpha \pi r} \sum_{j=0}^\infty \frac{(\frac{r}{a^\alpha})^{2b}}{(\frac{2j+1}{\alpha})^2 - (2b)^2}  dr + \bigO(R^{\frac{1}{\alpha}}) \\
& = \frac{8}{\alpha \pi} \sum_{j=0}^\infty \frac{1}{(\frac{2j+1}{\alpha})^2 - (2b)^2}  \int_0^R r^{2b-1}dr + \bigO(R^{\frac{1}{\alpha}}) = \frac{\tan(\pi \alpha b)}{2b^{2}}R^{2b} + \bigO(R^{\frac{1}{\alpha}}),
\end{align*}
where for the last step we have used \cite[Equation 1.421.1]{GRtable}.
Hence, for any $\epsilon > 0$,
\begin{align*}
\nu_{b}\big(\partial S_{a^\alpha} \cap B_{R}(0)\big) 
\geq \begin{cases}
(1-\epsilon) \frac{\tan(\pi \alpha b)}{2b^2} R^{2b}
& \text{if $2b < \frac{1}{\alpha}$}, 
	\\
(1-\epsilon) \frac{2}{\pi b} R^{2b} \log(\frac{1}{R}),
&  \text{if $2b= \frac{1}{\alpha}$},
	\\
c R^{\frac{1}{\alpha}}
& \text{if $2b > \frac{1}{\alpha}$},
\end{cases}
\end{align*}
for all small enough $R > 0$.
Employing this estimate in (\ref{nupartialOmegaBr0}), we conclude that, for each $\epsilon > 0$,
\begin{align*}
\nu(\partial \Omega \cap B_r(0)) 
\geq \begin{cases}
(1-\epsilon) \frac{\tan(\pi \alpha b)}{2b^2} r^{2b}
& \text{if $2b < \frac{1}{\alpha}$}, 
	\\
(1-\epsilon) \frac{2}{\pi b} r^{2b} \log(\frac{1}{r}),
&  \text{if $2b= \frac{1}{\alpha}$},
	\\
c r^{\frac{1}{\alpha}}
& \text{if $2b > \frac{1}{\alpha}$},
\end{cases}
\end{align*}
for all small enough $r > 0$. This establishes also the desired lower bounds in (\ref{nubounds}). 

Finally, assume that $d\mu(z) \asymp |z-z_{0}|^{2b-2}d^{2}z$ as $z\to z_0$ so that (\ref{c1muc2}) holds for some $c_1, c_2, \rho_0$.
The estimate (\ref{nuestimate}) follows by applying (\ref{nubounds}) to $\nu_1 := \mathrm{Bal}(\mu_1,\partial \Omega)$ and $\nu_2 := \mathrm{Bal}(\mu_2,\partial \Omega)$, where the measures $\mu_1$ and $\mu_2$ are defined by
$$d\mu_j(z) = \begin{cases} 
 |z|^{2b-2}d^{2}z & \text{if $|z| < \rho_0$},
	\\
\frac{1}{c_j} d\mu(z) & \text{if $|z| \geq \rho_0$}, 
\end{cases} \quad j = 1,2,$$
and noting that
$$c_1 \nu_1(\partial \Omega \cap B_r(z_0)) \leq \nu(\partial \Omega \cap B_r(z_0)) \leq c_2 \nu_2(\partial \Omega \cap B_r(z_0)).$$
The proof of Theorem \ref{mainth} is complete.

\section{Proof of Theorem \ref{mainth2}}\label{proofsec2}
Suppose $\Omega$ is an open subset of $\C^*$ satisfying $(i)$ and $(ii)$ of Theorem \ref{mainth2} with $z_0 = 0$ and some $m \geq 1$. Let $\mu$ be a non-negative measure of finite total mass on $\Omega$ such that $d\mu(z) = (1 +o(1)) |z-z_{0}|^{2b-2}d^{2}z$ as $z\to z_0$. Let $\alpha := \max_{1 \leq j \leq m} \alpha_j$ and let $\nu := \mathrm{Bal}(\mu,\partial \Omega)$. Let $\nu_j := \mathrm{Bal}(\mu|_{U_j},\partial U_j)$ be the balayage of the restriction of $\mu$ to the component $U_j$ of $\Omega \cap B_{\rho_0}(z_0)$.

The next lemma shows that up to terms of order $O(r^{1/\alpha})$, $\nu(\partial \Omega \cap B_r(0))$ is given by the sum of the contributions $\nu_j(\partial U_j \cap B_r(0))$ from the $m$ corners. In other words, the contributions from the $m$ corners decouple and can be computed locally up to terms of order $O(r^{1/\alpha})$. 

\begin{lemma}[Decoupling and localization]\label{decouplinglemma}
There is a constant $C>0$ such that 
\begin{align}\label{decouplingestimate}
\nu(\partial \Omega \cap B_r(0)) - C r^{\frac{1}{\alpha}} \leq \sum_{j=1}^m \nu_j(\partial U_j \cap B_r(0)) \leq \nu(\partial \Omega \cap B_r(0))
\end{align}
for all sufficiently small $r > 0$.
\end{lemma}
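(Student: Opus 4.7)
My plan is to prove the two inequalities of \eqref{decouplingestimate} separately. Throughout I fix $r \in (0,\rho_0)$, so that $\partial \Omega \cap B_r(0) = \sqcup_{j=1}^m (\partial U_j \cap B_r(0))$ by the local structure of Section \ref{multiplestructuresubsec}, and each $\partial U_j \cap B_r(0)$ is contained in $\partial \Omega$.

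For the right-hand inequality, I would use monotonicity of harmonic measure. For $z \in U_j$, a Brownian motion started at $z$ exits $\Omega$ through $\partial U_j \cap B_r(0) \subset \partial \Omega$ whenever it exits $U_j$ through the same set, so
\begin{equation*}
\omega(z, \partial U_j \cap B_r(0), U_j) \leq \omega(z, \partial U_j \cap B_r(0), \Omega) \leq \omega(z, \partial \Omega \cap B_r(0), \Omega).
\end{equation*}
Integrating against $\mu|_{U_j}$, summing over $j$, and using $\sqcup_j U_j \subset \Omega$ together with non-negativity of the integrand yields $\sum_j \nu_j(\partial U_j \cap B_r(0)) \leq \nu(\partial \Omega \cap B_r(0))$.

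For the left-hand inequality, the main tool is the strong Markov decomposition of harmonic measure: for $z \in U_j$ and $r < \rho_0$,
\begin{equation*}
\omega(z, \partial \Omega \cap B_r(0), \Omega) = \omega(z, \partial U_j \cap B_r(0), U_j) + \int_{\partial U_j \cap \partial B_{\rho_0}(0)} \omega(w, \partial \Omega \cap B_r(0), \Omega) \, d\omega_z^{U_j}(w),
\end{equation*}
obtained by conditioning a Brownian trajectory on its first exit from $U_j$ and using $\partial U_j \setminus \partial \Omega \subset \partial B_{\rho_0}(0)$. For any $w$ with $|w| \geq \rho_0$, summing \eqref{omegaleq8pirzmultiple2} over $k=1,\dots,m$ gives
\begin{equation*}
\omega(w, \partial \Omega \cap B_r(0), \Omega) \leq \sum_{k=1}^m \frac{8}{\pi} \Big(\frac{r}{\rho_0}\Big)^{1/\alpha_k} (1+C_k \rho_0^{\gamma_k})^{1/(\alpha_k\gamma_k)} \leq C r^{1/\alpha},
\end{equation*}
where I used $\alpha_k \leq \alpha$ and $r<1$. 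The remainder integral in the identity is therefore at most $C r^{1/\alpha}$ uniformly in $z \in U_j$. Splitting $\int_\Omega d\mu$ into the three pieces $\sqcup_j U_j$, $\Omega \cap \partial B_{\rho_0}(0)$, and $\Omega \setminus \overline{B_{\rho_0}(0)}$, applying the identity above on each $U_j$, and using \eqref{omegaleq8pirzmultiple2} directly on the other two pieces (both sit in $\{|z| \geq \rho_0\}$), I would obtain
\begin{equation*}
\nu(\partial \Omega \cap B_r(0)) \leq \sum_{j=1}^m \nu_j(\partial U_j \cap B_r(0)) + C \mu(\Omega) r^{1/\alpha},
\end{equation*}
which is the desired inequality since $\mu(\Omega) < \infty$.

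The step I expect to require the most care is the justification of the strong Markov identity itself in this geometry. It is a standard consequence of the probabilistic description of harmonic measure, but I would need to verify that the first exit from $U_j$ produces a clean decomposition into the ``exit through $\partial \Omega$'' and ``exit through $\partial B_{\rho_0}(0) \cap \Omega$'' cases, using the H\"older-$C^1$ regularity of $\partial U_j$ near $0$ from Lemma \ref{nearcornerlemma} to ensure that all relevant boundary points are regular for $U_j$. Everything else reduces to bookkeeping on top of the upper bound in Lemma \ref{omegaupperboundlemma}.
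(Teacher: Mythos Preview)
Your proposal is correct and follows essentially the same approach as the paper: monotonicity of harmonic measure for the right-hand inequality, and a strong Markov/Kakutani decomposition at the first exit from $U_j$ combined with Lemma~\ref{omegaupperboundlemma} for the left-hand inequality. The only cosmetic difference is that the paper phrases the decomposition via the event ``Brownian motion hits $A=\Omega\cap\partial B_{\rho_0}(0)$'' and bounds $\mathbb{P}(E_z)\le 1$, whereas you write the equivalent integral identity over $\partial U_j\cap\partial B_{\rho_0}(0)$ and bound the $\omega_z^{U_j}$-mass by $1$; your observation that only an inequality is needed (so full boundary regularity is not essential) is well taken.
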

\begin{proof}
By definition,
$$\nu(\partial \Omega \cap B_r(0)) = \int_\Omega \omega(z, \partial \Omega \cap B_r(0), \Omega) d\mu(z)$$
and, for $j = 1, \dots, m$,
$$\nu_j(\partial U_j \cap B_r(0)) = \int_{U_j} \omega(z, \partial U_j \cap B_r(0), U_j) d\mu(z).$$
Since $U_j$, $j = 1, \dots, m$, are the connected components of $U := \Omega \cap B_{\rho_0}(0) = \cup_{j=1}^m U_j$, we have
\begin{align*}
\sum_{j=1}^m \nu_j(\partial U_j \cap B_r(0)) = &\; \sum_{j=1}^m \int_{U_j} \omega(z, \partial U_j \cap B_r(0), U_j) d\mu(z)
	\\
=&\; \sum_{j=1}^m \int_{U_j} \omega(z, \partial U \cap B_r(0), U) d\mu(z) = \int_{U} \omega(z, \partial \Omega \cap B_r(0), U) d\mu(z).
\end{align*}
Hence, using twice that $U \subset \Omega$,
\begin{align}\nonumber
\sum_{j=1}^m \nu_j(\partial U_j \cap B_r(0))
& \leq \int_{U} \omega(z, \partial \Omega \cap B_r(0), \Omega) d\mu(z)
	\\ \label{sumjleq}
& \leq \int_{\Omega} \omega(z, \partial \Omega \cap B_r(0), \Omega) d\mu(z)
=  \nu(\partial \Omega \cap B_r(0))
\end{align}
for all $r \in (0,\rho_0)$, which is the second inequality in (\ref{decouplingestimate}).

Let $r \in (0,\rho_0)$. By Kakutani's theorem, (see e.g. \cite[Theorem F.6 and page 477]{GM2005})
$$\omega(z, \partial \Omega \cap B_r(0), \Omega)
= \mathbb{P}(W_\infty(z) \in \partial \Omega \cap B_r(0))$$
where $W_\infty(z) \in \partial \Omega \cap B_r(0)$ is the event that a Brownian motion starting at $z$ exits $\Omega$ at a point in $\partial \Omega \cap B_r(0)$. 

Suppose $z \in U_j$ for some $j = 1, \dots, m$ and let $E_z$ be the event that the Brownian motion $W_t(z)$ starting at $z$ hits the set $A := \Omega \cap \partial B_{\rho_0}(0)$. We split the event $W_\infty(z) \in \partial \Omega \cap B_r(0)$ into two depending on whether the set $A$ is hit or not:
$$\mathbb{P}(W_\infty(z) \in \partial \Omega \cap B_r(0))
= \mathbb{P}(\{W_\infty(z) \in \partial \Omega \cap B_r(0)\} \setminus E_z) + \mathbb{P}(\{W_\infty(z) \in \partial \Omega \cap B_r(0)\} \cap E_z).$$
If $W_t(z)$ does not hit the arc $A$, then the Brownian motion stays in $U_j$ for all times. So, using Kakutani's theorem again,
$$\mathbb{P}(\{W_\infty(z) \in \partial \Omega \cap B_r(0)\} \setminus E_z) =  \omega(z, \partial U_j \cap B_r(0), U_j).$$
Thus
\begin{align}\label{omegapartialUj}
\omega(z, \partial U_j \cap B_r(0), U_j) \geq \omega(z, \partial \Omega \cap B_r(0), \Omega) - \mathbb{P}(\{W_\infty(z) \in \partial \Omega \cap B_r(0)\} \cap E_z).
\end{align}

On the other hand, if the Brownian motion starting at $z \in U_j$ hits $A$, then in order to exit $\Omega$ in $\partial \Omega \cap B_r(0)$, it must make it from some point in $A$ to $\partial \Omega \cap B_r(0)$. The probability for a Brownian motion starting at a point $z_1 \in A$ to exit $\Omega$ in $\partial \Omega \cap B_r(0)$ is $\omega(z_1, \partial \Omega \cap B_r(0), \Omega)$. Thus,
\begin{align}\label{mathbbPWinfty}
\mathbb{P}(\{W_\infty(z) \in \partial \Omega \cap B_r(0)\} \cap E_z)
\leq \mathbb{P}(E_z) \sup_{z_1 \in A} \omega(z_1, \partial \Omega \cap B_r(0), \Omega).
\end{align}
Since
$$\omega(z_1, \partial \Omega \cap B_r(0), \Omega)
= \sum_{k=1}^m \omega(z_1, \partial U_k \cap B_r(0), \Omega),$$
the estimate (\ref{omegaleq8pirzmultiple2}) yields
$$\sup_{z_1 \in A} \omega(z_1, \partial \Omega \cap B_r(0), \Omega)
 \leq
 \sup_{z_1 \in A} \sum_{k=1}^m \omega(z_1, \partial U_k \cap B_r(0), \Omega)
 \leq \sum_{k=1}^m \frac{8}{\pi} \bigg(\frac{r}{\rho_0}\bigg)^{\frac{1}{\alpha_k}} 
\left(1 +  C_k \rho_0^{\gamma_k }\right)^{\frac{1}{\alpha_k  \gamma_k}}.$$
Using this estimate and the fact that $\mathbb{P}(E_z) \leq 1$ in (\ref{mathbbPWinfty}), and then substituting the resulting inequality into (\ref{omegapartialUj}), we arrive at
$$\omega(z, \partial U_j \cap B_r(0), U_j) \geq \omega(z, \partial \Omega \cap B_r(0), \Omega) - \sum_{k=1}^m \frac{8}{\pi} \bigg(\frac{r}{\rho_0}\bigg)^{\frac{1}{\alpha_k}} 
\left(1 +  C_k \rho_0^{\gamma_k }\right)^{\frac{1}{\alpha_k  \gamma_k}}$$
for each $z \in U_j$.
Integrating with respect to $d\mu(z)$ over $U_j$, we obtain
$$\nu_j(\partial U_j \cap B_r(0)) \geq \int_{U_j} \omega(z, \partial \Omega \cap B_r(0), \Omega) d\mu(z) - \sum_{k=1}^m \frac{8}{\pi} \bigg(\frac{r}{\rho_0}\bigg)^{\frac{1}{\alpha_k}} 
\left(1 +  C_k \rho_0^{\gamma_k }\right)^{\frac{1}{\alpha_k  \gamma_k}} \mu(U_j).$$
Summing over $j$ from $1$ to $m$ and then using that $\cup_{j=1}^m U_j = \Omega \setminus (\Omega \setminus B_{\rho_0}(0)) =\Omega \cap B_{\rho_0}(0)$, we find
$$\sum_{j=1}^m \nu_j(\partial U_j \cap B_r(0)) 
\geq \int_{\cup_{j=1}^m U_j} \omega(z, \partial \Omega \cap B_r(0), \Omega) d\mu(z) - \sum_{k=1}^m \frac{8}{\pi} \bigg(\frac{r}{\rho_0}\bigg)^{\frac{1}{\alpha_k}} 
\left(1 +  C_k \rho_0^{\gamma_k }\right)^{\frac{1}{\alpha_k  \gamma_k}} \sum_{j=1}^m \mu(U_j).$$
$$
= \nu(\partial \Omega \cap B_r(0)) - \int_{\Omega \setminus B_{\rho_0}(0)} \omega(z, \partial \Omega \cap B_r(0), \Omega) d\mu(z)
 - \sum_{k=1}^m \frac{8}{\pi} \bigg(\frac{r}{\rho_0}\bigg)^{\frac{1}{\alpha_k}} 
\left(1 +  C_k \rho_0^{\gamma_k }\right)^{\frac{1}{\alpha_k  \gamma_k}} \mu(\Omega \cap B_{\rho_0}(0)).$$
The integral on the right-hand side can be estimated using (\ref{omegaleq8pirzmultiple2}):
\begin{align*}
\int_{\Omega \setminus B_{\rho_0}(0)} \omega(z, \partial \Omega \cap B_r(0), \Omega) d\mu(z)
& = \sum_{k=1}^m \int_{\Omega \setminus B_{\rho_0}(0)} \omega(z, \partial U_k \cap B_r(0), \Omega) d\mu(z)
	\\
& \leq \sum_{k=1}^m 
\frac{8}{\pi} \bigg(\frac{r}{\rho_0}\bigg)^{\frac{1}{\alpha_k}} 
\left(1 +  C_k \rho_0^{\gamma_k }\right)^{\frac{1}{\alpha_k  \gamma_k}}
 \mu(\Omega \setminus B_{\rho_0}(0))
 \end{align*}
for all $r \in (0,\rho_0)$. It transpires that
\begin{align}\label{sumjgeq}
\sum_{j=1}^m \nu_j(\partial U_j \cap B_r(0)) 
\geq \nu(\partial \Omega \cap B_r(0)) - \sum_{k=1}^m 
\frac{8}{\pi} \bigg(\frac{r}{\rho_0}\bigg)^{\frac{1}{\alpha_k}} 
\left(1 +  C_k \rho_0^{\gamma_k }\right)^{\frac{1}{\alpha_k  \gamma_k}} \mu(\Omega).
\end{align}
Since $\alpha = \max_{1 \leq j \leq m} \alpha_j$, the first inequality in (\ref{decouplingestimate}) follows.
\end{proof}

For each $j$, we can estimate $\nu_j(\partial U_j \cap B_r(0))$ by applying Theorem \ref{mainth} to the domain $U_j$. Summing over $j$ from $1$ to $m$, this yields, for all sufficiently small $r > 0$,
\begin{align*}\nonumber
& (1-\epsilon) \sum_{j=1}^m \frac{\tan(\pi \alpha_j b)}{2b^2} r^{2b} \leq \sum_{j=1}^m \nu_j(\partial U_j \cap B_r(0))
  \leq (1+\epsilon) \sum_{j=1}^m \frac{\pi  \alpha_j}{2b} \bigg(1  + \frac{16 b}{\pi (\frac{1}{\alpha_j} - 2b)}\bigg)  r^{2b}, \hspace{-.15cm}&&  \text{$2b < \frac{1}{\alpha}$}, 
	\\ \nonumber 
& m_{\alpha} (1-\epsilon) \frac{2}{\pi b} r^{2b} \log(\tfrac{1}{r}) - c r^{2b} 
 \leq
\sum_{j=1}^m \nu_j(\partial U_j \cap B_r(0))
\leq m_{\alpha} (1+\epsilon) \frac{4}{b} r^{2b} \log(\tfrac{1}{r}) + C r^{2b}, &&  \text{$2b= \frac{1}{\alpha}$}, 	
	\\ 
& c r^{\frac{1}{\alpha}} \leq
\sum_{j=1}^m \nu_j(\partial U_j \cap B_r(0))
\leq C r^{\frac{1}{\alpha}}, & &  \text{$2b > \frac{1}{\alpha}$},
\end{align*}
where $m_{\alpha}$ is the number of $\alpha_j$ such that $\alpha_j = \alpha$. Combining these inequalities with Lemma \ref{decouplinglemma}, the estimates in (\ref{numultiplebounds}) follow. The fact that (\ref{nuestimate}) holds if $d\mu(z) \asymp |z-z_{0}|^{2b-2}d^{2}z$ as $z\to z_0$ then follows in the same way as in the proof of Theorem \ref{mainth}.

\section{Application}\label{section:application}

In this section, we highlight the relevance of Theorems \ref{mainth} and \ref{mainth2} in the study of two-dimensional Coulomb gases. The planar Coulomb gas model for $n$ points with external potential $Q:\C\to \R\cup \{+\infty\}$ is the probability measure
\begin{align}
& \frac{1}{Z_{n}}\prod_{1\leq j<k \leq n}|\mathrm{z}_{j}-\mathrm{z}_{k}|^{\beta} \prod_{j=1}^{n} e^{-n \frac{\beta}{2} Q(\mathrm{z}_{j})}d^{2}\mathrm{z}_{j}, & & \mathrm{z}_{1},\ldots,\mathrm{z}_{n}\in \C, \label{general density intro}
\end{align}
where $Z_{n}$ is the normalization constant,  and $\beta >0$ is the inverse temperature. For $\beta=2$, \eqref{general density intro} is also the law of the complex eigenvalues of a class of $n \times n$ random normal matrices (see e.g. \cite{BFreview}). Standard equilibrium convergence theorems imply, under quite general assumptions on $Q$, that the points $\mathrm{z}_{1},\ldots,\mathrm{z}_{n}$ will condensate (as $n \to \infty$ and with high probability) on the support $S$ of an equilibrium measure $\mu$. The measure $\mu$ is defined as the unique measure minimizing 
\begin{align*}
\sigma \mapsto \int \log \frac{1}{|z-w|} d\sigma(z)d\sigma(w) + \int Q(z) d\sigma(z)
\end{align*}
among all Borel probability measures $\sigma$ on $\C$ \cite{SaTo}. For example, if $Q(z) = |z|^{2b}$, then 
\begin{align*}
d\mu(z) = \frac{b^{2}}{\pi}|z|^{2b-2}\chi_{S}(z)d^{2}z, \qquad S=\{z \in \C: |z| \leq b^{-\frac{1}{2b}}\},
\end{align*}
where $\chi_{S}$ is the indicator function of $S$, see also Figure \ref{fig:circular sector} (row 1). 
\begin{figure}
\begin{tikzpicture}[master]
\node at (0,0) {\includegraphics[width=3.63cm]{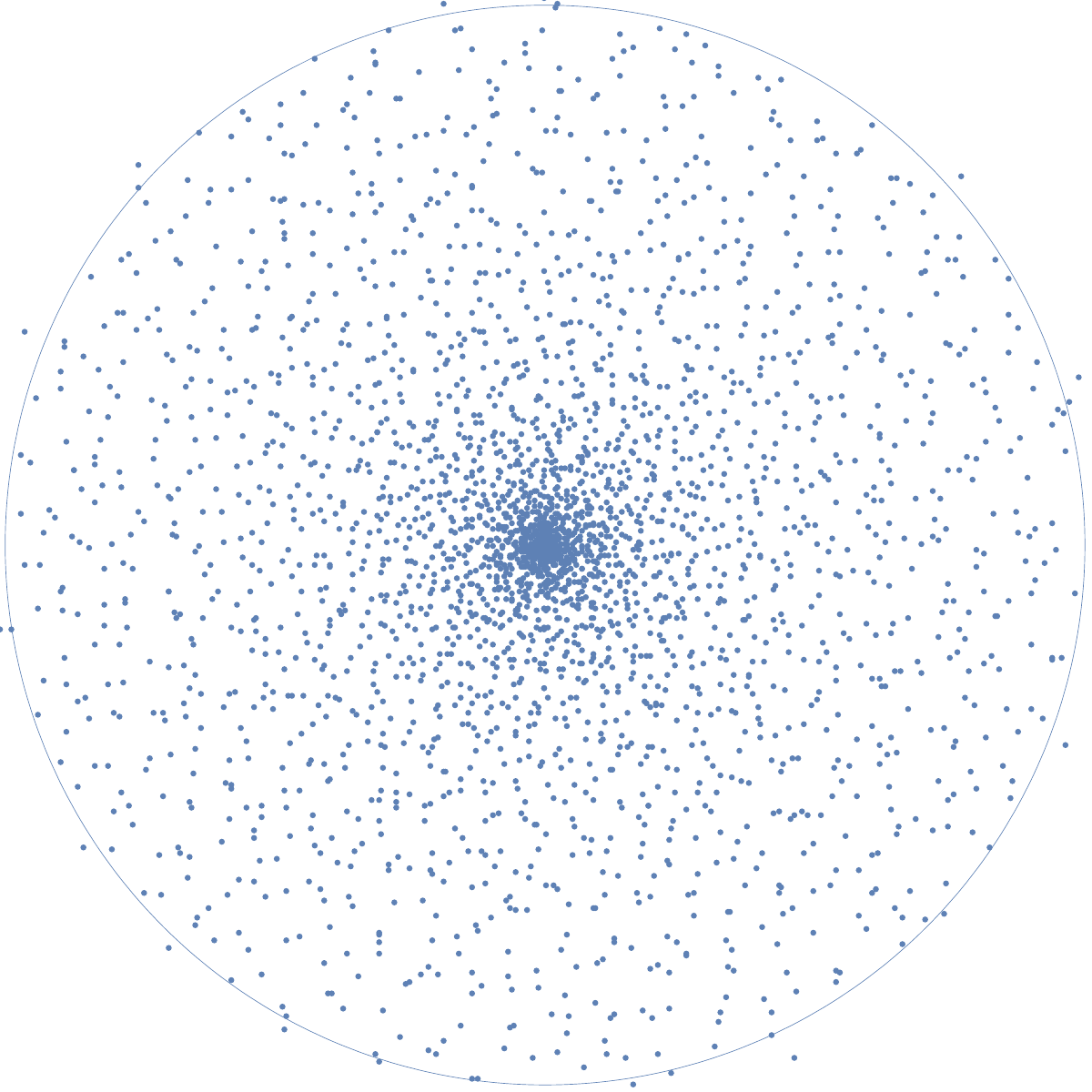}};
\node at (0,2) {\footnotesize $b=\frac{1}{3}$};
\end{tikzpicture} \hspace{-0.21cm}
\begin{tikzpicture}[slave]
\node at (0,0) {\includegraphics[width=3.63cm]{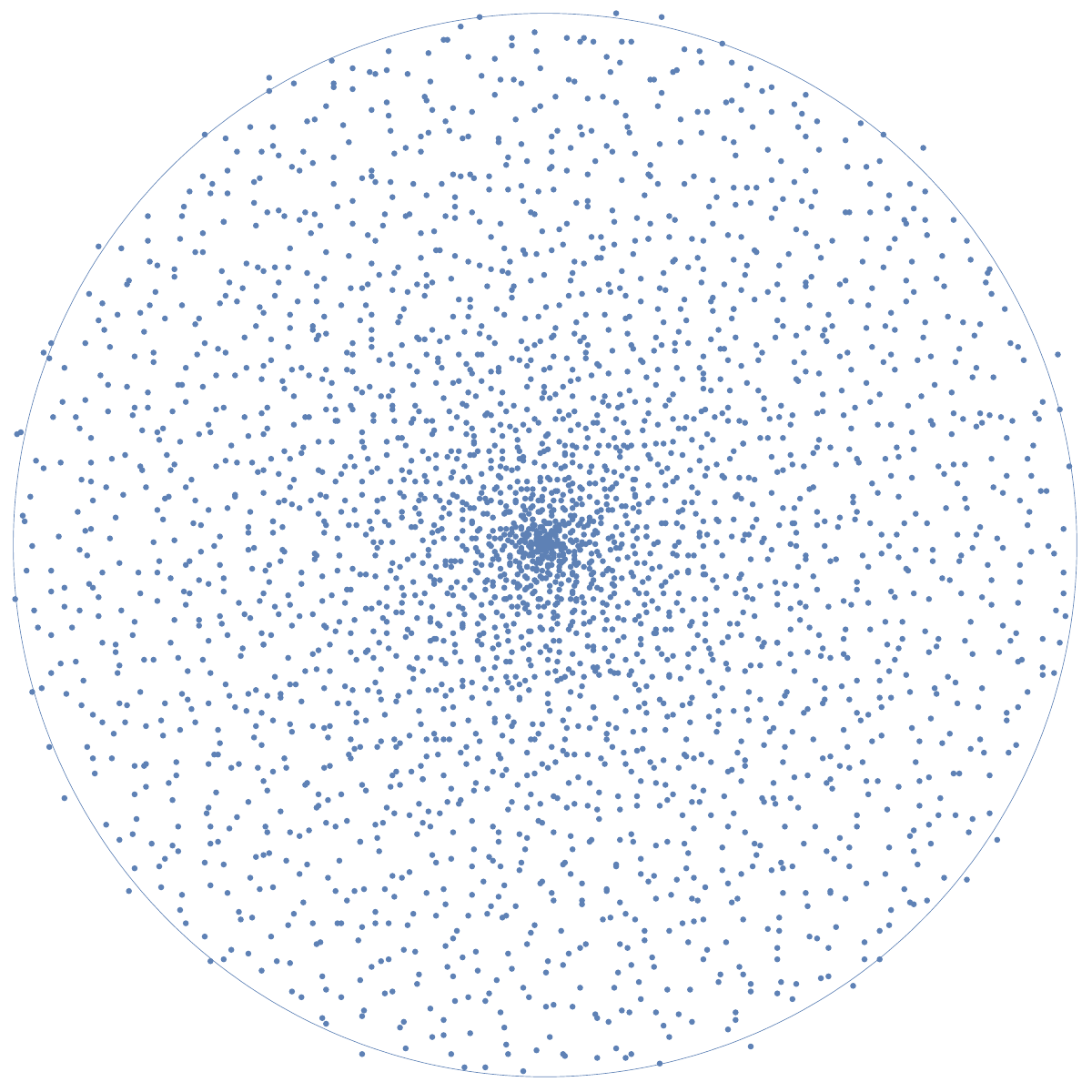}};
\node at (0,2) {\footnotesize $b=\frac{1}{2}$};
\end{tikzpicture} \hspace{-0.21cm}
\begin{tikzpicture}[slave]
\node at (0,0) {\includegraphics[width=3.63cm]{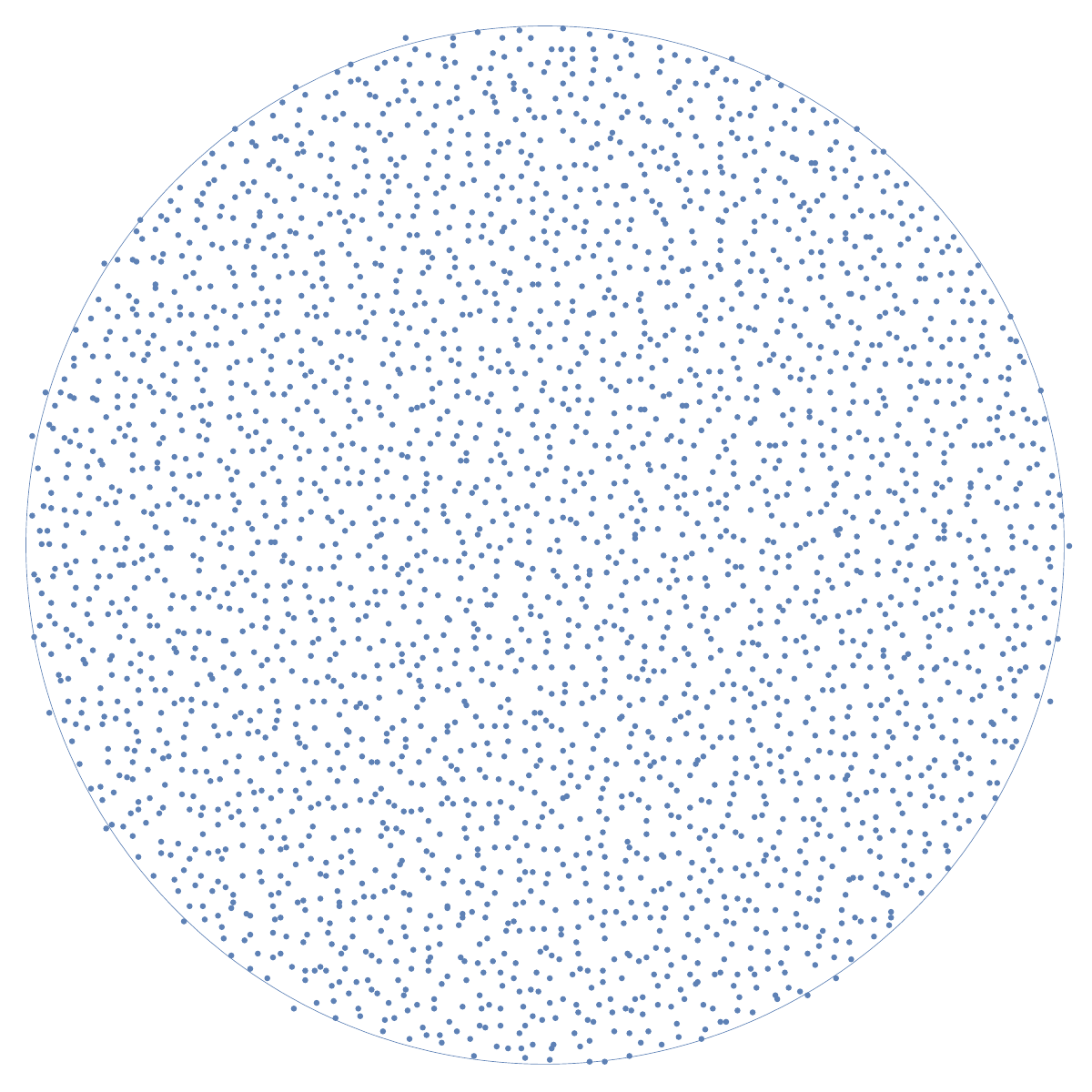}};
\node at (0,2) {\footnotesize $b=1$};
\draw[dashed] (-9.3,-1.9)--(5.3,-1.9);
\end{tikzpicture} \hspace{-0.21cm}
\begin{tikzpicture}[slave]
\node at (0,0) {\includegraphics[width=3.63cm]{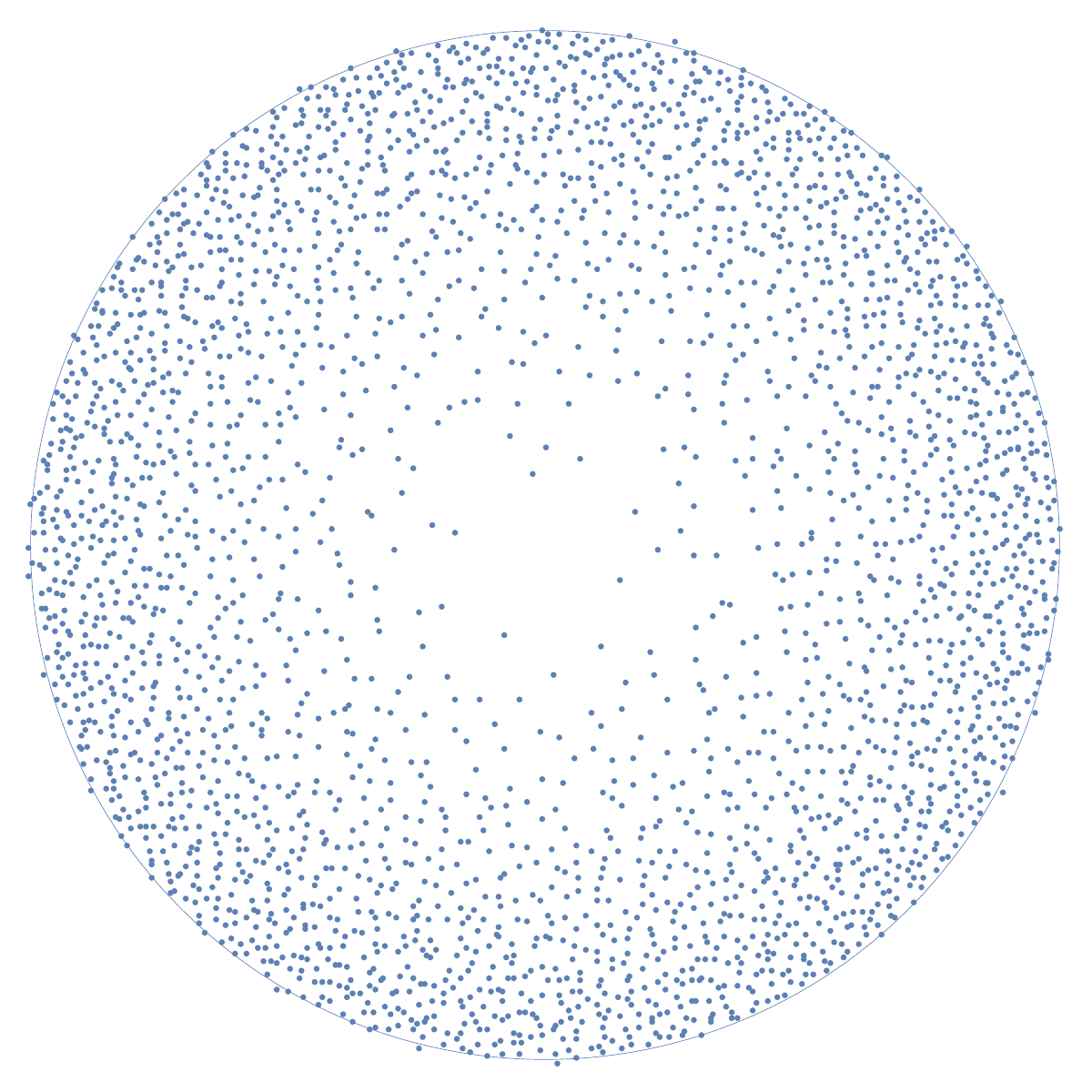}};
\node at (0,2) {\footnotesize $b=2$};
\end{tikzpicture} \\[-0.5cm]
\begin{tikzpicture}[slave]
\node at (0,0) {\includegraphics[width=3.63cm]{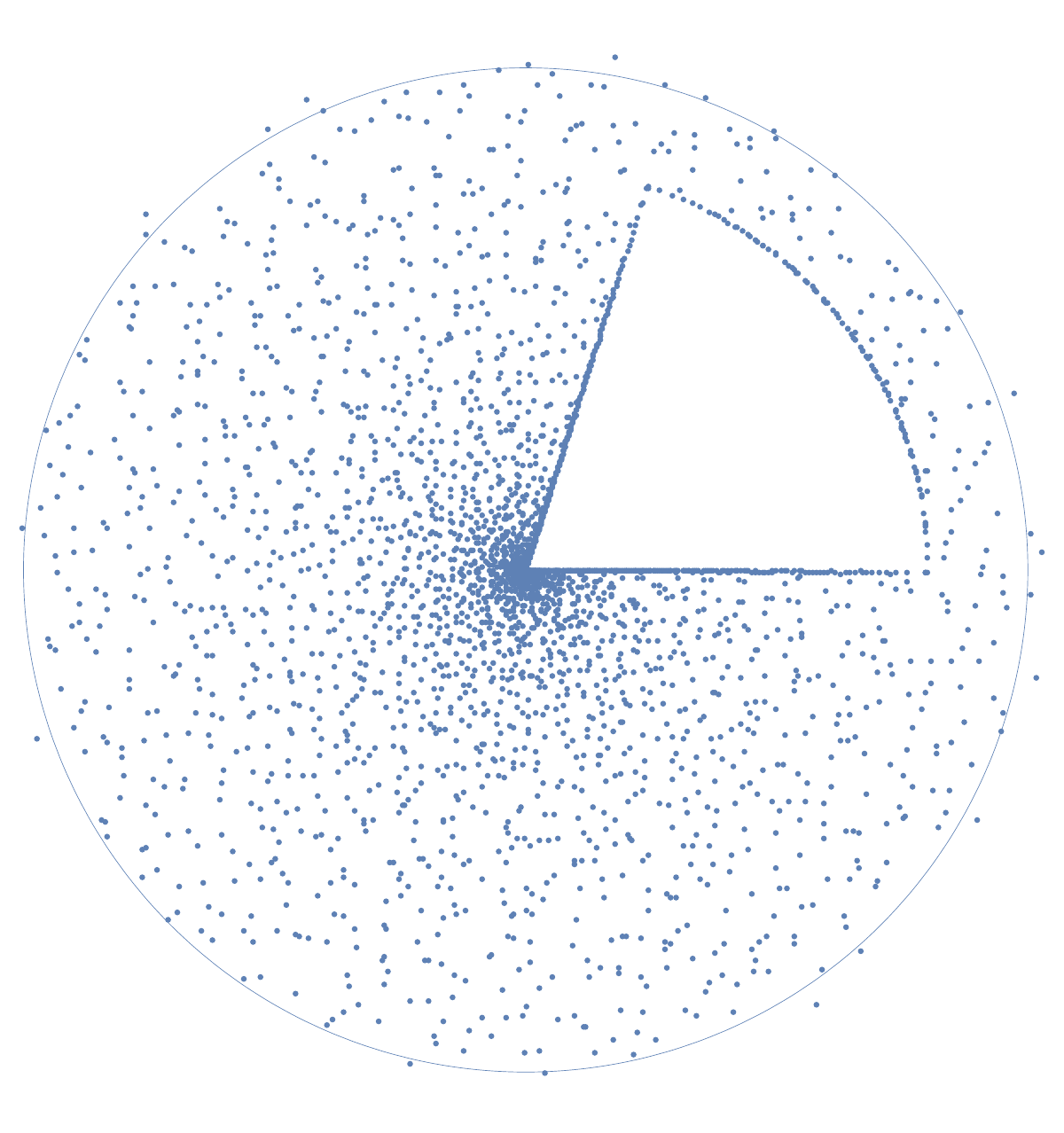}};
\end{tikzpicture} \hspace{-0.21cm}
\begin{tikzpicture}[slave]
\node at (0,0) {\includegraphics[width=3.63cm]{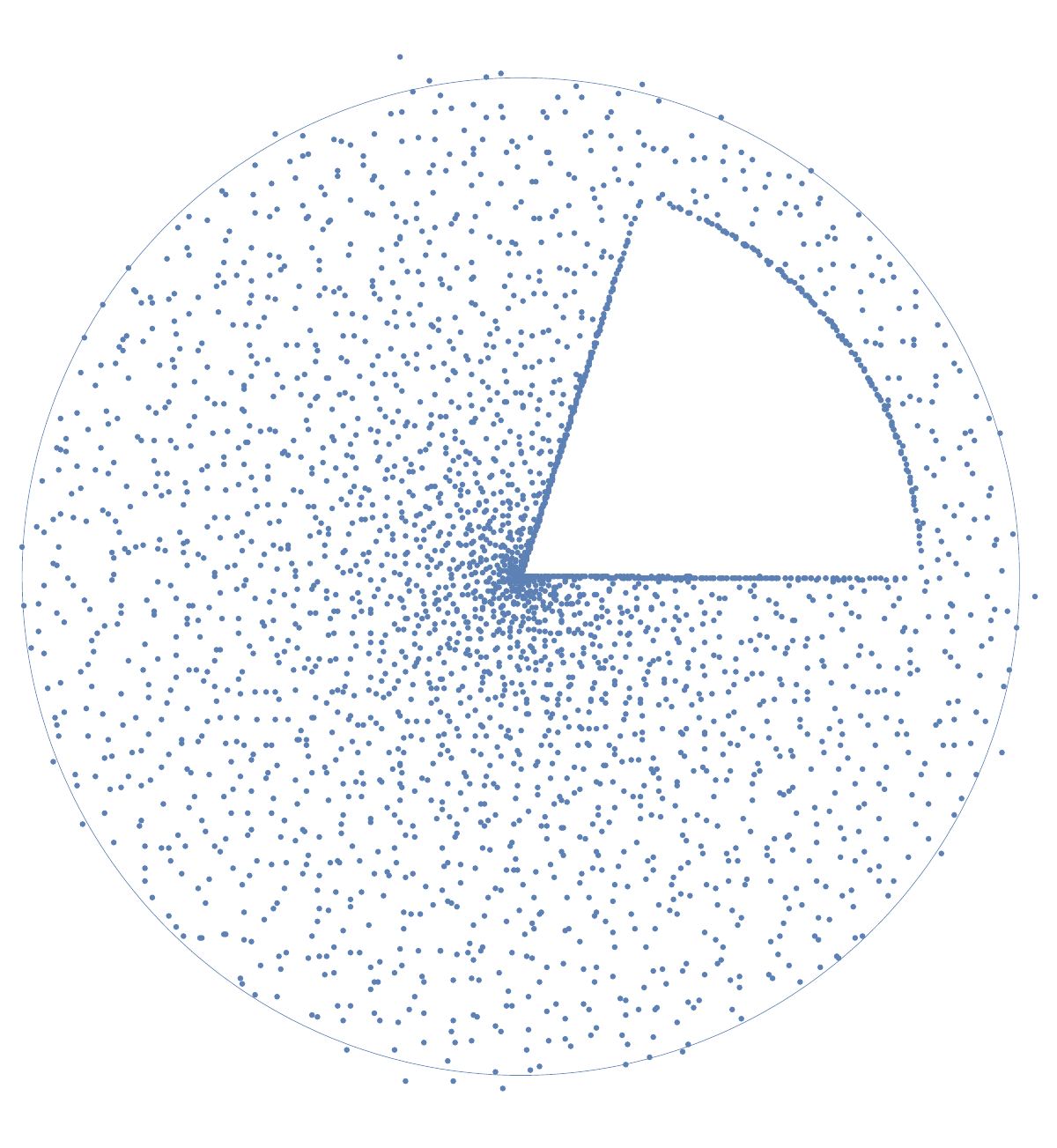}};
\end{tikzpicture} \hspace{-0.21cm}
\begin{tikzpicture}[slave]
\node at (0,0) {\includegraphics[width=3.63cm]{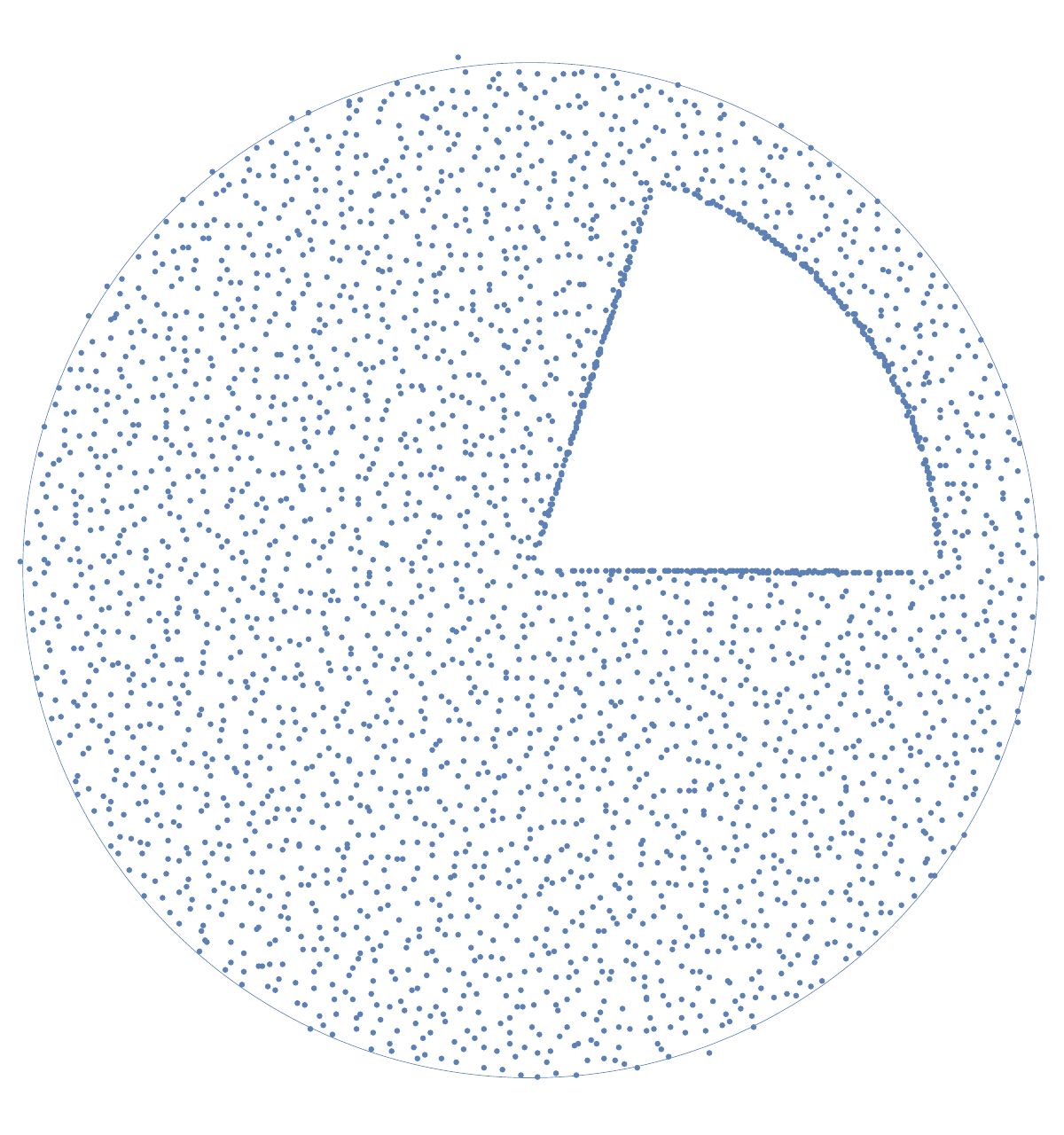}};
\node at (-1.9,1.6) {\footnotesize $\alpha=2/5$};
\end{tikzpicture} \hspace{-0.21cm}
\begin{tikzpicture}[slave]
\node at (0,0) {\includegraphics[width=3.63cm]{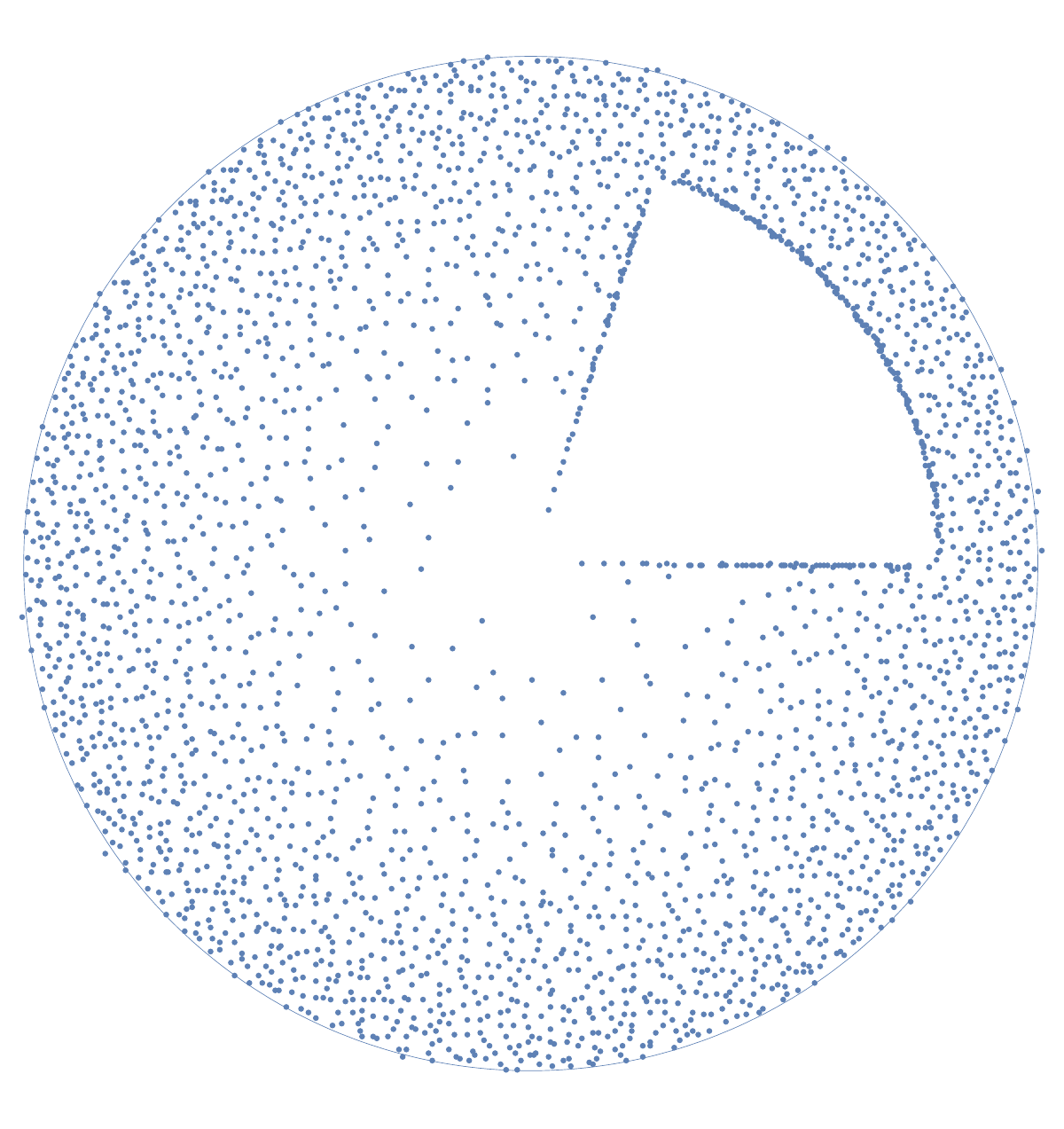}};
\end{tikzpicture} \\[-0.9cm]
\begin{center}
\begin{tikzpicture}[master]
\node at (0,0) {\includegraphics[width=14cm]{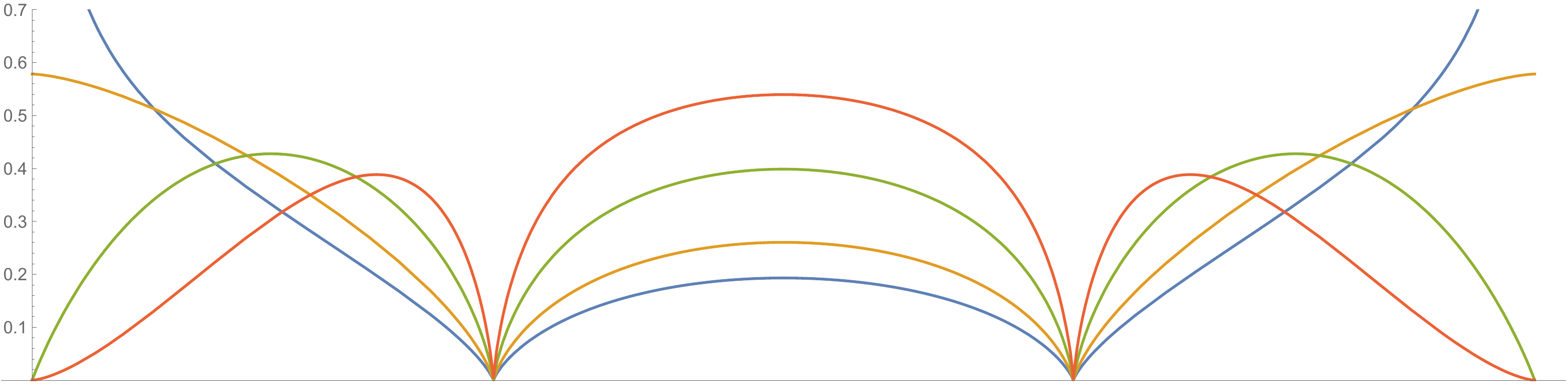}};
\node at (0,-1) {\footnotesize $b=\frac{1}{3}$};
\node at (0,-0.25) {\footnotesize $b=\frac{1}{2}$};
\node at (0,0.4) {\footnotesize $b=1$};
\node at (0,1.1) {\footnotesize $b=2$};
\node at (-5.5,1.5) {\footnotesize $\asymp x^{-\frac{1}{3}}$};
\node at (-6.35,1.2) {\footnotesize $\asymp 1$};
\node at (-6.35,-1) {\footnotesize $\asymp x$};
\node at (-6,-1.4) {\footnotesize $\asymp x^{\frac{3}{2}}$};
\node at (-6.35,1.2) {\footnotesize $\asymp 1$};
\draw[fill] (-6.71,-1.65) circle (0.04);
\node at (-6.71,-1.9) {\footnotesize $0$};
\draw[fill] (-2.59,-1.65) circle (0.04);
\node at (-2.59,-1.9) {\footnotesize $a$};
\draw[fill] (2.59,-1.65) circle (0.04);
\node at (2.61,-1.9) {\footnotesize $a e^{ \pi \alpha i}$};
\draw[fill] (6.71,-1.65) circle (0.04);
\node at (6.71,-1.9) {\footnotesize $0$};
\draw[dashed] (-7.3,-2.15)--(7.3,-2.15);
\end{tikzpicture}
\end{center}
\vspace{-0.5cm}\caption{\label{fig:circular sector} (Taken from \cite{C2023}) Row 1: the point process \eqref{general density intro} with $Q(z) = |z|^{2b}$ and the indicated values of $b$. In each plot, the thin blue circle is $\partial S = \{z: |z|=b^{-\frac{1}{2b}}\}$. Row 2: the point process \eqref{general density intro} with $Q$ as in \eqref{Q example hard}, $Q_{0}(z) = |z|^{2b}$ and $\Omega=\{re^{i\theta} : 0<r<a, \, 0<\theta < \pi\alpha\}$, $\alpha=2/5$, and $a=0.8b^{-\frac{1}{2b}}$. Row 3: the normalized density $\frac{d\nu(z)/|dz|}{\nu(\partial \Omega)}$ for $\alpha=2/5$ and $a=0.8b^{-\frac{1}{2b}}$.}
\end{figure}
More generally, if $Q$ is smooth on $S$, then by \cite[Theorem II.1.3]{SaTo} $\mu$ is absolutely continuous with respect to $d^{2}z$ and given by $\frac{1}{4\pi}\Delta Q(z) \chi_{S}(z)d^{2}z$. The situation is more complicated if $Q=+\infty$ on a certain subset $\Omega \subset \C$: this so-called ``hard wall constraint" confines the points to lie in $\C\setminus \Omega$. Recent studies on Coulomb gases with hard edges include \cite{A2018, AR2017, Berezin, BP2024, C2023}. Suppose for example that $Q_{0}$ is smooth on $\C$, let $S_{0}$ be the support of the associated equilibrium measure $\mu_{0}$, let $\Omega \subset \C^{*}$ be a finitely connected Jordan domain such that $\partial \Omega \subset S_{0}$, and define
\begin{align}\label{Q example hard}
Q(z) := \begin{cases}
Q_{0}(z), & \mbox{if } z\notin \Omega, \\
+\infty, & \mbox{if } z \in \Omega.
\end{cases}
\end{align}
It is proved in \cite{A2018, AR2017, C2023} that the associated equilibrium measure is $\mu:= \mu_{0}|_{S_{0}\setminus \Omega} + \nu$, where $\nu:=\mathrm{Bal}(\mu_{0}|_{\Omega},\partial \Omega)$. The point process \eqref{general density intro} with $Q$ as in \eqref{Q example hard}, $Q_{0}(z) := |z|^{2b}$, and $\Omega:=\{re^{i\theta} : 0<r<a, \, 0<\theta < \pi \alpha \}$, $\alpha=2/5$, $a=0.8b^{-\frac{1}{2b}}$ is illustrated in Figure \ref{fig:circular sector} (row 2) for several values of $b$, and the balayage measure $\nu$ is illustrated in Figure \ref{fig:circular sector} (row 3).

The universality conjecture asserts that as $n \to \infty$ the limiting local statistical properties of the random points $\mathrm{z}_{1},\ldots,\mathrm{z}_{n}$ around a given $z_{0}\in \C$ depend only on $\beta$ and on the behavior of $\mu$ at $z_{0}$. Consider the hard wall case \eqref{Q example hard}, and suppose that $d\mu_{0}(z) \asymp |z-z_{0}|^{2b-2}d^{2}z$ as $z\to z_0$ for some $b > 0$, and that $\Omega$ has a H\"older-$C^1$ corner of opening $\pi \alpha$ at $z_{0}$ for some $\alpha \in (0,2]$. Theorem \ref{mainth} implies that 
\begin{align}\label{rate}
d\mu(z)/|dz| \asymp \begin{cases}
|z-z_{0}|^{\min\{2b,\frac{1}{\alpha}\}-1}, & \mbox{if } 2b \neq \frac{1}{\alpha}, \\
|z-z_{0}|^{2b-1} \log \frac{1}{|z-z_{0}|}, & \mbox{if } 2b = \frac{1}{\alpha},
\end{cases} \qquad \mbox{as } z\to z_{0}, \; z\in \partial \Omega,
\end{align}
where $|dz|$ is the arclength measure on $\partial U$. Note that the rate of convergence (or blow-up) in \eqref{rate} depends on $\Omega$ only through $\alpha$. If $\Omega$ has multiple corners at $z_{0}$, then similar estimates can be obtained from Theorem \ref{mainth2}.
For $\beta=2$, universality results on local statistics near hard edges can be found in \cite{Seo, ACC2023} in the case where $\alpha=1$ and $b=1$. In view of the above, new universality classes are expected for other values of $\alpha$ and $b$.

\appendix 

\section{A technical lemma}
Lemma \ref{exerciselemma} stated below is used in Section \ref{proofsec} to obtain (\ref{fnear0}). The lemma is a special case of \cite[Exercise 3.4.1]{P1992}. For the reader's convenience, we include a proof. 

Let $\Omega_1$ be a connected Jordan domain such that $\partial \Omega_1$ has a H\"older-$C^1$ corner of opening $\pi \alpha$ for some $0 < \alpha \leq 2$ at $0\in \partial \Omega_1$, and such that one of the boundary arcs forming the corner is tangent to $(0,+\infty)$ at $0$. Hence, there exist $\gamma\in (0,1]$ and curves $C_{\pm} \subset \partial \Omega_1$  of class $C^{1,\gamma}$ ending at $0$ and lying on different sides of $0$ such that \eqref{argz0pialpha} holds. 

\begin{lemma}\label{exerciselemma}
Suppose $f$ is a conformal map from the open upper half-plane $\mathbb{H}$ onto $\Omega_1$ such that $f(0) = 0$, $C_+ \subset f(\R_+)$, and $C_- \subset f(\R_-)$. Suppose also that $\gamma < \frac{1}{2}$. Then
\begin{align}\label{lol2}
f(z) = c z^\alpha(1 + O(z^{\alpha \gamma})), \qquad \mbox{as } z \to 0, \; z \in  \bar{\mathbb{H}},
\end{align}
for some $c>0$.
\end{lemma}
\begin{proof}
Let $\Phi(w) = w^{1/\alpha}$, where the principal branch is used for the root, so that $\Phi$ maps the sector $\{z:\arg z \in (0,\pi \alpha)\}$ conformally onto the upper-half plane $\mathbb{H}$. Let $w_{\pm}$ be the parametrizations of $C_{\pm}$ given in Lemma \ref{nearcornerlemma}. By \eqref{wpm der near 0}, 
\begin{align*}
& (\Phi \circ w_{+})'(r) = \Phi'(w_{+}(r))w_{+}'(r) = \frac{1}{\alpha}r^{\frac{1}{\alpha}-1} \big(1 + O(r^{\gamma})\big), & & \mbox{as } r \to 0, \\
& (\Phi \circ w_{-})'(r) = \Phi'(w_{-}(r))w_{-}'(r) = -\frac{1}{\alpha}r^{\frac{1}{\alpha}-1} \big(1 + O(r^{\gamma})\big), & & \mbox{as } r \to 0.
\end{align*}
Integrating the above, and then replacing $r$ by $r^{\alpha}$, we find
\begin{align*}
& (\Phi \circ w_{+})(r^{\alpha}) = r\big(1 + O(r^{\alpha\gamma})\big), & & (\Phi \circ w_{-})(r^{\alpha}) = - r\big(1 + O(r^{\alpha\gamma})\big), & & \mbox{as } r \to 0,
\end{align*}
Using also that
\begin{align*}
& \frac{d}{dr}(\Phi \circ w_{+})(r^{\alpha}) %= (\Phi \circ w_{+})'(r^{\alpha})\alpha r^{\alpha -1}
= 1 + O(r^{\alpha\gamma}), & &  \frac{d}{dr}(\Phi \circ w_{-})(r^{\alpha}) 
%= (\Phi \circ w_{-})'(r^{\alpha})\alpha r^{\alpha -1}
= -1 + O(r^{\alpha \gamma}), & & \mbox{as } r \to 0,
\end{align*}
it follows that
$$r \mapsto \begin{cases}
(\Phi \circ w_{+})(r^{\alpha}), & r \geq 0,	\\
(\Phi \circ w_{-})((-r)^{\alpha}), & r < 0,
\end{cases}$$
is a Jordan curve of class $C^{1,\gamma^{\star}}$ parametrizing $\partial \Phi (\Omega_{1})$ in a small neighborhood of $0$, where $\gamma^{\star} = \min\{\alpha\gamma,1\}$.
The assumption $\gamma < \frac{1}{2}$, together with $\alpha\leq 2$, implies that $\gamma^{\star}=\alpha\gamma<1$. By the Kellogg–Warschawski theorem \cite[Theorem 3.6]{P1992}, the function $g = \Phi\circ f: \mathbb{H} \to \Phi(\Omega_{1})$ satisfies
\begin{align}\label{lol1}
g(z) = c^{1/\alpha}z\big(1+ O(z^{\alpha \gamma})\big), \qquad \mbox{as } z\to 0, \; z\in \bar{\mathbb{H}},
\end{align}
for some constant $c\in \C\setminus\{0\}$. Hence $f(z)=g(z)^{\alpha}$ satisfies \eqref{lol2}. Moreover, since $C_{+}\subset f(\R_{+})$ and $w_{+}'(0)=1$, we have $c>0$, which finishes the proof.
\end{proof}
\subsection*{Acknowledgements}
CC acknowledges support from the Swedish Research Council, Grant No. 2021-04626, and JL acknowledges support from the Swedish Research Council, Grant No. 2021-03877.

\footnotesize

\end{document}